\theoremstyle{plain}
\newtheorem{lemma}{Lemma}[section]
\newtheorem{theorem}[lemma]{Theorem}
\newtheorem{proposition}[lemma]{Proposition}
\newtheorem{conjecture}[lemma]{Conjecture}
\theoremstyle{definition}
\newtheorem{definition}[lemma]{Definition}
\newtheorem{remark}[lemma]{Remark}
\theoremstyle{remark}
\newtheorem*{ack}{Acknowledgements}
\numberwithin{equation}{section}
\newcommand{\s}{\mathbb{S}}
\newcommand{\R}{\mathbb{R}}
\newcommand{\RR}{\mathbb{R}}
\newcommand{\cP}{\mathcal{P}}
\newcommand{\proj}{{\mathsf{pr}}}
\newcommand{\RP}{\mathbb{R}P}
\newcommand{\sphere}{\mathrm{\mathbb{S}}}
\DeclareMathOperator{\RCD}{\mathsf{RCD}}
\DeclareMathOperator{\CD}{\mathsf{CD}}
\DeclareMathOperator{\Haus}{\mathcal{H}}
\DeclareMathOperator{\supp}{supp}
\DeclareMathOperator{\Susp}{Susp}
\DeclareMathOperator{\vol}{vol}
\newcommand{\be}{\begin{equation}}
\newcommand{\ee}{\end{equation}}
\numberwithin{equation}{section}
\begin{document}


\title[Alexandrov spaces with positive or nonnegative Ricci curvature]{Three-Dimensional Alexandrov spaces with positive or nonnegative Ricci curvature}


\author[Deng]{Qintao Deng$^*$}
\address[Deng]{Central China Normal University, School of Mathematics and Statistics \& Hubei Key Laboratory of Mathematical Sciences , 152 Luoyu Road, Wuhan 430079, P.R. China}
\email{qintaodeng@yeah.net}
\thanks{$^{*}$ Supported in part by NSFC (no.11171126) and the Special Fund for Basic Scientific Research of Central Colleges (no. CCNU10A02015 and no. CCNU14A05038).}


\author[Galaz-Garc\'ia]{Fernando Galaz-Garc\'ia$^{**}$}
\address[Galaz-Garc\'ia]{Karlsruher Institut f\"ur Technologie (KIT), Institut f\"ur Algebra und Geometrie, Englerstr. 2, 76131 Karlsruhe, Germany}
\email{galazgarcia@kit.edu}
\thanks{$^{**}$ Supported in part by research grant MTM2014-57769-C3-3-P from the Ministerio de Econom\'ia y Competitividad of Spain.} 


\author[Guijarro]{Luis Guijarro$^{***}$}
\address[Guijarro]{Department of Mathematics, Universidad Aut\'onoma de Madrid, and ICMAT CSIC-UAM-UCM-UC3M, Spain}
\email{luis.guijarro@uam.es}
\thanks{$^{***}$ Supported by research grants MTM2011-22612, MTM2014-57769-C3-3-P from the Ministerio de Econom\'ia y Competitividad and MINECO: ICMAT Severo Ochoa project SEV-2011-0087.}


\author[Munn]{Michael Munn}
\address[Munn]{New York University, Courant Institute, 251 Mercer St, New York, NY 10012}
\email{munn@nyu.edu}


\begin{abstract}We study closed three-dimensional Alexandrov spaces with a lower Ricci curvature bound in the $\CD^*(K,N)$ sense, focusing our attention on those with positive or nonnegative Ricci curvature. First, we show that a closed three-dimensional $\CD^*(2,3)$-Alexandrov space must be homeomorphic to a spherical space form or to the suspension of $\mathbb{R}P^2$. We then classify closed three-dimensional $\CD^*(0,3)$-Alexandrov spaces. 
\end{abstract}

\date{\today}


\subjclass[2010]{Primary: 53C23; Secondary: 53C20, 57N10}
\keywords{Alexandrov space, Ricci curvature, metric measure space, curvature-dimension}

\maketitle

\section{Introduction}
Alexandrov spaces can be viewed as metric geometry generalizations of Riemannian manifolds with a lower sectional curvature bound. They arise as the Gromov-Hausdorff limit of this class of manifolds or when taking quotients by isometric Lie group actions with closed orbits. As such, they provide a natural setting in which to study many questions in global Riemannian geometry and by now there is a large literature concerning their properties (see \cite{BBI} and the references therein). 

While definitions for a lower sectional curvature bound make sense on a metric space $(X,d)$, it turns out that to capture the notion of a lower Ricci curvature bound one requires the additional structure of a measure $m$ on $X$. There have been various approaches to defining Ricci lower bounds for metric measure spaces $(X,d,m)$ (see, for example, \cite{BakryEmery,CheegerColding,LV,SturmI,SturmII}). Here, we focus on the so-called \emph{curvature-dimension condition} defined via optimal transportation of probability measures on the space $X$. Sturm \cite{SturmI, SturmII} and Lott-Villani \cite{LV} independently introduced the curvature dimension condition $\CD(K,N)$ which provides a notion of Ricci curvature bounded below by $K \in \mathbb{R}$ and dimension bounded above by $N \in (1, \infty]$. 

Over time, these definitions have been further adapted through the work of Bacher and Sturm \cite{BacherSturm}, and Ambrosio, Gigli and Savar\'e \cite{AGS} and there has been considerable effort to better understand the structure this condition imposes on a space (see, for instance, \cite{AGS, MondinoNaber} among many others). In particular, the $\CD^*(K,N)$ condition, introduced by Bacher-Sturm in \cite{BacherSturm}, is a sort of local variant of the $\CD(K,N)$ and slightly  weaker than the usual, global curvature-dimension condition. This so-called \emph{reduced curvature dimension condition} has the local-to-global property on non-branching metric measure spaces which is a necessary component of our argument.

In \cite{GGG}, the second and third authors classified closed three-dimensional Alexandrov spaces with positive or non-negative curvature. In the present paper, we classify Alexandrov  spaces of dimension $3$ under the weaker hypothesis of positive or non-negative Ricci curvature. This extends the classification of closed, smooth three-manifolds with positive or nonnegative Ricci curvature to the class of Alexandrov spaces.
Throughout this paper, we use the following terminology:


\begin{definition}
A metric space $(X,d)$ is said to be a \emph{$\CD^*(K,N)$-Alexandrov space} provided $(X,d)$ is an $N$-dimensional Alexandrov space of curvature bounded below by some $k \in \mathbb{R}$ and the Hausdorff measure $\Haus^N$ satisfies the $\CD^*(K,N)$ condition (see Definition \ref{DEF:CDSTAR}). Define a $\CD(K,N)$-Alexandrov space similarly. 
\end{definition}

We first classify closed three-dimensional Alexandrov spaces with positive Ricci curvature.


\begin{theorem} 
\label{THM:RICCI_POS}
Let $(X^3,d)$ be a closed $\CD^*(2,3)$-Alexandrov space.
\begin{enumerate}
	\item If $X^3$ is a topological manifold, then it is homeomorphic to a spherical space form.\smallskip
	\item If $X^3$ is not a topological manifold, then it is homeomorphic to $\Susp(\RP^2)$, the suspension of $\RP^2$. 
\end{enumerate}
\end{theorem}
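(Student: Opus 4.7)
Proof proposal. The plan is to split the argument into the manifold and non-manifold cases, matching the dichotomy of the statement.

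\textbf{Manifold case.} Assume $X$ is a topological $3$-manifold. The Bonnet--Myers theorem for $\CD^*(K,N)$ spaces (applicable since $K/(N-1)=1$) bounds $\diam(X)\leq\pi$. The reduced curvature-dimension condition is local and enjoys the local-to-global property on non-branching metric measure spaces, so $\CD^*(2,3)$ lifts to the universal cover $\tilde X$, which is therefore also compact with diameter at most $\pi$. The deck group $\pi_1(X)$, acting freely and isometrically on the compact $\tilde X$, must then be finite. Since $X$ is a topological 3-manifold, so is $\tilde X$; being simply-connected and closed, Perelman's resolution of the Poincar\'e conjecture identifies $\tilde X\cong\sphere^3$, and Perelman's elliptization theorem then implies $X$ is homeomorphic to a spherical space form $\sphere^3/\Gamma$.

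\textbf{Non-manifold case.} The topologically singular set $S\subset X$ consists of points $p$ with $\Sigma_p\not\cong\sphere^2$; since $\Sigma_p$ is a $2$-dimensional Alexandrov space of curvature $\geq 1$ and a pseudomanifold, it must be homeomorphic to $\RP^2$, and general Alexandrov theory ensures $|S|<\infty$. I would construct a branched double cover $\pi:\tilde X\to X$ ramified over $S$, a closed topological 3-manifold (locally $C(\sphere^2)\to C(\RP^2)$) carrying a pulled-back $\CD^*(2,3)$-Alexandrov structure, with deck involution $\sigma:\tilde X\to\tilde X$ whose fixed set is $\pi^{-1}(S)$ and which acts near each fixed point as $-\mathrm{id}$ on $\R^3$. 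Applying the manifold case to $\tilde X$ gives $\tilde X\cong\sphere^3/\Gamma$ for some finite $\Gamma$. It remains to show (i) $\tilde X\cong\sphere^3$, so $\Gamma$ is trivial, and (ii) $\sigma$ has exactly two fixed points; together with the classical Livesay classification of $\Z/2$-actions on $\sphere^3$ with isolated fixed points, this yields $X\cong\sphere^3/\sigma\cong\Susp(\RP^2)$. For (ii), Smith theory on the $\Z/2$-cohomology sphere $\sphere^3$ forces the fixed set of any $\Z/2$-action to be a $\Z/2$-cohomology sphere, which in the isolated-fixed-point case must be $\sphere^0$. For (i), I would analyze the branched cover via Van Kampen, exploiting that the construction kills all meridian loops around the $p_i$ and that $\Susp(\RP^2)$ is simply-connected, to deduce $\pi_1(\tilde X)=1$.

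\textbf{Main obstacle.} The most delicate step will be justifying that the branched double cover $\tilde X$ inherits a genuine $\CD^*(2,3)$-Alexandrov structure: while $\pi$ is a local isometry off $S$, one must verify that the reduced curvature-dimension condition extends across the branch points, where the local geometry changes from a $C(\RP^2)$ cone to a Euclidean ball. A secondary difficulty is step (i), ruling out non-trivial $\Gamma$: one must exclude exotic spherical space forms as branched double covers of non-manifold $\CD^*(2,3)$ spaces, which may require a finer volumetric or fundamental-group argument than Smith theory alone can provide.
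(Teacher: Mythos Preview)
Your manifold case matches the paper's exactly: finite $\pi_1$ via the generalized Bonnet--Myers theorem for $\CD^*$ (this is \cite[Theorem~7.10]{BacherSturm} in the paper), then elliptization.

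In the non-manifold case you and the paper agree on constructing the orientable double branched cover $Y\to X$ and on showing $Y$ is again a $\CD^*(2,3)$-Alexandrov space; you correctly flag the extension of the $\CD^*$ condition across the branch points as the crux, and this is precisely what the paper isolates as Proposition~\ref{PRO:CD_FILL} (removing a point of Hausdorff measure zero does not affect the $\CD^*$ inequality, proved via a Wasserstein-space argument that uses non-branching through Lemma~\ref{LEM:NO_COLLAPSE}).

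The endgame, however, diverges, and yours has a gap. The paper does \emph{not} try to identify $Y$ topologically or invoke Smith theory. Instead, once $\pi_1(Y)$ is finite it notes $\pi_1(Y\setminus Y')\cong\pi_1(Y)$ (filling in $3$-balls), so $\proj_*\pi_1(Y\setminus Y')$ sits as an index-$2$ subgroup of $\pi_1(X\setminus X')$; hence $\pi_1(X\setminus X')$ is finite. Then Epstein's theorem on compact $3$-manifolds with $\RP^2$ boundary components and finite $\pi_1$ forces $X\setminus X'\cong\RP^2\times[0,1]$, so $|X'|=2$ and $X\cong\Susp(\RP^2)$.

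Your route instead asks for (i) $Y\cong\sphere^3$ and then (ii) $|\mathrm{Fix}(\sigma)|=2$ via Smith theory. Step~(ii) is fine \emph{once} $Y$ is a $\Z/2$-cohomology sphere, but that presupposes progress on~(i): a generic spherical space form $\sphere^3/\Gamma$ is not one. Your argument for~(i) is where the gap lies. Invoking that $\Susp(\RP^2)$ is simply-connected is circular, since that is the conclusion you are after. And a Van Kampen analysis of the branched cover only yields that $\pi_1(Y)$ embeds with index~$2$ in $\pi_1(X\setminus X')$---exactly the paper's input to Epstein, not a proof that $\pi_1(Y)=1$. Without Epstein's theorem (or an independent classification of orientation-reversing involutions with isolated fixed points on all spherical space forms, which you have not supplied), step~(i) does not close.
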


Our Theorem \ref{THM:RICCI_POS}  corresponds to the classification of closed Riemannian 3-manifolds with positive Ricci curvature given by Hamilton in \cite{Hamilton}. Note that while, for the class of smooth Riemannian manifolds, this classification is up to diffeomorphism, the lack of differentiable structure for metric measure spaces (or even Alexandrov spaces) means we obtain classification only up to homeomorphism. The proof of Theorem \ref{THM:RICCI_POS} is contained in Section \ref{SEC:RICCI_POS}.

As mentioned above, our proof generalizes the result of Galaz-Garc\'ia and Guijarro (see also \cite{HarveySearle}) by adapting their argument to the metric measure space setting. Petrunin \cite{Petrunin} proved that the lower Alexandrov curvature bound is compatible with the curvature-dimension condition for lower Ricci curvature bound given by Lott-Sturm-Villani. As such, our assumption of a lower Ricci bound is an inherently weaker condition and we prove various additional properties to fully employ their argument in our setting.

The following theorem deals with the nonnegatively curved case. The homeomorphism statements are also due to the lack of a differentiable structure;  nonetheless, when the Alexandrov space is flat, we get classification up to isometry.


\begin{theorem}
\label{THM:RICCI_NONNEG}

Let $(X^3,d)$ be a closed $\CD^*(0,3)$-Alexandrov space. 
\begin{enumerate}
\item If $X^3$ is a topological manifold, then one of the following holds:
	\begin{itemize}
		\item $X^3$ is homeomorphic to a spherical space form,
		\item $X^3$ is homeomorphic to $\s^2 \times \s^1$, $\RP^2 \times \s^1$, $\RP^3 \# \RP^3$ or $\s^2 \tilde{\times} \s^1$,\\
		or
		\item $X^3$ is isometric to a closed, flat three-dimensional  space form\\
	\end{itemize}
\item If $X^3$ is not a topological manifold, then either: 
	\begin{itemize}
	\item $X^3$ is homeomorphic to $\Susp(\RP^2)$, $\Susp(\RP^2) \# \Susp(\RP^2)$ or 
	\item $X^3$ is isometric to a quotient of a closed, orientable, flat-three-dimensional manifold by an orientation reversing isometric involution with only isolated fixed points. 
	\end{itemize}
\end{enumerate}
\end{theorem}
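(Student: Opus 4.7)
The plan is to mimic the argument for Theorem \ref{THM:RICCI_POS}, adapting the approach of \cite{GGG} from the Alexandrov--sectional to the $\CD^*$--Ricci setting. The Bonnet--Myers step used in the positive case is now replaced by a Cheeger--Gromoll-type splitting theorem for nonnegatively Ricci-curved metric measure spaces, in the form appropriate for $\CD^*(0,N)$-Alexandrov spaces (combining Petrunin's compatibility of the Alexandrov and $\CD$ conditions with the infinitesimal Hilbertianity available in the Alexandrov category and Gigli's splitting for $\RCD(0,N)$).

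First I would pass from $X$ to a suitable simply-connected Alexandrov cover $\hat X$: the usual universal cover when $X$ is a topological manifold, and the orbifold universal cover otherwise (this is a simply-connected Alexandrov three-space since the non-manifold points of a three-dimensional Alexandrov space are isolated and each has link $\RP^2$). Because Alexandrov spaces are non-branching and, as noted in the introduction, $\CD^*$ has the local-to-global property on non-branching spaces, $\hat X$ inherits a $\CD^*(0,3)$-Alexandrov structure on which the deck group $\hat\Gamma$ acts by measure-preserving isometries with compact quotient.

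Next I would invoke the splitting theorem to decompose
\[
\hat X\;\cong\;Y^{3-k}\times\mathbb{R}^k,\qquad 0\le k\le 3,
\]
with $Y$ a compact $\CD^*(0,3-k)$-Alexandrov space containing no line, and treat the four values of $k$ in turn. Dimensional and connectedness considerations rule out $k=2$. The case $k=3$ gives $\hat X=\mathbb{R}^3$, and the Bieberbach theorems together with the classification of three-dimensional flat orbifolds yield the closed flat three-manifolds in part (1) and the flat orientation-reversing involution quotients in part (2). The case $k=1$ forces $Y^2$ to be a simply-connected nonnegatively curved closed two-dimensional Alexandrov space, hence homeomorphic to $\s^2$, and an enumeration of the cocompact isometric actions of $\hat\Gamma$ on $\s^2\times\mathbb{R}$ produces the four non-flat manifolds in (1), together with $\Susp(\RP^2)\#\Susp(\RP^2)$ in (2) when the action has isolated fixed points. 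In the remaining case $k=0$, the cover $\hat X$ is compact, simply connected and line-free, and I would show by a rigidity argument that $\hat X$ then either satisfies the stronger $\CD^*(2,3)$ condition---so that Theorem \ref{THM:RICCI_POS} applies and delivers a spherical space form or $\Susp(\RP^2)$---or, after a further finite cover, reduces to one of the splitting cases already treated.

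The main obstacle, in my view, is precisely this last dichotomy. In the smooth setting it is an easy consequence of Hamilton's classification and of the fact that simply-connected Ricci-flat closed three-manifolds are flat, but in the singular $\CD^*(0,3)$-Alexandrov category one lacks a pointwise Ricci tensor and must instead establish a volume-comparison or maximum-principle rigidity that promotes nonnegative to positive Ricci in the absence of lines; ruling out compact, simply-connected, no-line examples outside the list of Theorem \ref{THM:RICCI_POS} is the delicate heart of the argument. Once this rigidity is secured, the identification of the deck actions on $\mathbb{R}^3$ and on $\s^2\times\mathbb{R}$, together with the verification that the resulting non-manifold quotients are exactly those listed in part (2), proceeds by routine adaptation of the arguments in \cite{GGG} and the proof of Theorem \ref{THM:RICCI_POS}.
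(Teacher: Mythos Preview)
Your outline contains a genuine gap, and it is precisely the step you flag as the ``main obstacle'': the $k=0$ case. You propose to show that a compact, simply-connected $\CD^*(0,3)$-Alexandrov space with no line either satisfies $\CD^*(2,3)$ or, after a finite cover, splits. No such rigidity is available in the singular setting, and you give no indication of how to obtain it; in dimension three the smooth statement ``Ricci-flat implies flat'' is an algebraic fact about the curvature tensor, with no known analogue for $\CD^*$-spaces. As written, this step is a wish rather than an argument.

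The paper avoids this obstacle entirely by organizing the proof differently. In the manifold case one does \emph{not} split first and then analyze the compact factor; instead one dichotomizes on $|\pi_1(X)|$. If $\pi_1(X)$ is finite, Perelman's resolution of the Elliptization Conjecture yields a spherical space form directly---no curvature rigidity is invoked. If $\pi_1(X)$ is infinite, the universal cover is noncompact and hence contains a line, so Gigli's splitting theorem applies and your $k=0$ case never occurs. In the non-manifold case the paper does \emph{not} pass to the orbifold universal cover (which is what forces you into the problematic compact, simply-connected, non-manifold scenario); instead it passes to the orientable double branched cover $Y$ of Lemma~\ref{LEM:BR_COV} and Proposition~\ref{PRO:BR_COV_GEOM}, which is a closed orientable \emph{manifold}. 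Proposition~\ref{PRO:CD_FILL} shows that $Y$ is again a $\CD^*(0,3)$-Alexandrov space, so the manifold case applies to $Y$, and $X$ is recovered by classifying orientation-reversing isometric involutions with isolated fixed points on the manifolds produced there, exactly as in \cite{GGG}. Your $k=1,2,3$ analysis is essentially parallel to the paper's treatment of the infinite-$\pi_1$ subcase; the divergence---and the gap---is solely in how you handle the compact-cover situation.
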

The proof of Theorem \ref{THM:RICCI_NONNEG} is contained in Section \ref{SEC:RICCI_NONNEG}.

 Observe that the spaces in Theorems~\ref{THM:RICCI_POS} and \ref{THM:RICCI_NONNEG} are exactly those appearing in the classification of closed, three-dimensional Alexandrov spaces with positive or nonnegative curvature (in the triangle comparison sense) (see \cite[Theorem~1.3 and Corollary~2.2]{GGG}).

Our approach of studying $\CD^*(K,N)$-Alexandrov spaces rather than general $\CD^*(K,N)$ metric measure spaces has various advantages, for example, in Alexandrov spaces geodesics do not branch. 
While Rajala-Sturm \cite{RajalaSturm}  have shown that $\RCD(K,N)$-spaces are \emph{essentially non-branching} there is still much that is not known about the local structure and topology of $\RCD$-spaces (though we refer the reader to recent  work of Mondino-Naber \cite{MondinoNaber} for some results in this area). Indeed, it seems questions of a topological nature are not well-posed on such spaces. On the other hand, Alexandrov spaces are sufficiently well behaved so that assuming a $\CD(K,N)$ condition on their Hausdorff measure yields significant topological conclusions.


\begin{ack} Q.~Deng, F.~Galaz-Garc\'ia and M.~Munn would like to thank the hospitality of the Hausdorff Research Institute for Mathematics at the Universit\"at Bonn during the Junior Hausdorff Trimester Program ``Optimal Transportation''. L.~Guijarro would like to thank the Differential Geometry Group of the Institut f\"ur Algebra und Geometrie at the Karlsruher Institut f\"ur Technology (KIT) for its hospitality. 
\end{ack}


\section{Background and Preliminaries}
\label{SEC:BACKGROUND}


\subsection{A brief overview of Alexandrov spaces} 
\label{SUBSEC:ALEX}In this subsection, we give a very brief overview of the relevant facts about  Alexandrov spaces that we will need for this paper (see also \cite{BBI} for alternate definitions and a more complete introduction to Alexandrov spaces).

A finite-dimensional {\it Alexandrov space} is a complete, locally compact, connected, length space $(X,d)$ which satisfies locally a lower curvature bound $k \in \mathbb{R}$ in an angle-comparison sense. Roughly speaking, geodesic triangles in $X$ are ``fatter'' than equivalent ones in the space form of constant curvature $k$. 
Recall that a  \emph{geodesic space} is a metric space such that any two points $p,q \in X$ can be joined by a rectifiable curve whose length is equal to $d(p,q)$. We call such a distance realizing curve a minimal geodesic and use $[pq]_X$ to denote a (not necessarily unique) minimal geodesic in $X$ joining $p$ and $q$. 

To describe the lower curvature bound, fix a number $k\in \mathbb{R}$. For any triple of points $p,q,r \in X$ (usually thought of as a triangle $\Delta pqr$), let $\tilde{\Delta}pqr$ denote the triangle $\Delta \bar{p}\bar{q}\bar{r}$ of points in the  $k$-plane such that $|\bar{p}\bar{q}|_{k} = d(p,q), |\bar{q}\bar{r}|_{k} = d(q,r)$ and $|\bar{p}\bar{r}|_{k} = d(p,r)$. 


\begin{definition}
\label{DEF:ALEX}
A finite-dimensional, complete, locally compact, length space $(X,d)$ has Alexandrov curvature $\geq k$ in an open set $U \subset X$ if, for each quadruple of points $(p; a,b,c)$ in $U$,  
\[\widetilde{\angle}_{k}apb + \widetilde{\angle}_{k}bpc +\widetilde{\angle}_{k}cpa \leq 2\pi,\]
where $\widetilde{\angle}_{k}apb$ is the comparison angle at $\bar{p}$ of a triangle $\tilde{\Delta}apc$ in the $k$-plane (define $\widetilde{\angle}_{k}bpc$, $\widetilde{\angle}_{k}cpa $ similarly).
\end{definition}
When not precisely specified, we will refer simply to an Alexandrov space as a metric space with Alexandrov curvature bounded below by $k \in \mathbb{R}$, where we generally consider $k$ to be some large negative number.

The Alexandrov curvature condition has strong consequences on the structure of the space. In particular, for a complete length space, the local condition above implies the same is true globally for any quadruple of points in $X$ (see \cite{BBI, BGP}).

For $p \in X$,  the {\em space of directions of $X$ at $p$}, denoted by $\Sigma_pX$, is the metric completion of the geodesic directions at $p\in X$. It is an Alexandrov space of dimension $\dim(X) -1$  with curvature greater than or equal to one. The \emph{tangent cone} of $X$ at $p$, denoted by $T_pX$, is by definition the Euclidean cone over $\Sigma_pX$. It agrees with the closure under the angle distance of the collection of reparametrized geodesics emanating from $p$ (modulo the obvious identification of such curves when they agree on some interval of the form $[0,\delta)$ for some $\delta>0$. We denote a geodesic joining two points $p,q\in X$ by $[pq]$.


\begin{definition}[{\em Singularities of Alexandrov Spaces}]
A point $p\in X$ is a {\it metric singular point} if $\Sigma_pX$ is not isometric to the unit sphere $\mathbb{S}^{n-1}$. For some $\delta>0$, a point is called  {\it $\delta$-singular} if $\Haus^{n-1}(\Sigma_pX) \leq \vol(\mathbb{S}^{n-1}) - \delta$. Let $S_X$ and $S_{\delta}$ denote, respectively, the set of metric singular points of $X$ and the set of $\delta$-singular points of $X$. Note that $S_X = \bigcup_{\delta >0} S_{\delta}$.
We say that a point $p$ is {\it topologically singular} if  $\Sigma_pX$ is not homeomorphic to a sphere. 
\end{definition} 

The set of topologically singular points of a finite-dimensional Alexandrov space $X$ (without boundary) has codimension at least three. Thus, dimension $3$ is the first dimension in which topological singularities  arise in Alexandrov spaces. 


\subsection{Double branched cover for 3-dimensional Alexandrov spaces}
Perelman's conical neighborhood theorem asserts that a small metric neighborhood of a point $p\in X$ is homeomorphic to the cone over $\Sigma_pX$ (see \cite{Perelman-Morse}). Therefore, a closed three-dimensional Alexandrov space that is not a topological manifold must have some point $p \in X$ whose space of directions $\Sigma_pX$ is not homeomorphic to $\s^2$. Since $\Sigma_pX$ is positively curved, it must be homeomorphic to  the real projective plane $\RP^2$.  It follows that $X$ can be exhibited as a compact  $3$-manifold whose boundary consists of finitely many $\RP^2$-components where one glues in cones over $\RP^2$.  Moreover, $X$ is the base of a two-fold branched cover $\proj: Y \to X$ whose total space $Y$ is a closed, orientable manifold and whose branching set consists of the  topologically singular points in $X$.

 
 \begin{lemma}[\protect{see \cite[Lemma 1.7]{GGG}}] 
 \label{LEM:BR_COV}
 Let $X$ be a closed three-dimensional Alexandrov space. If $X$ is not a topological manifold,  then there is a closed, orientable $3$-manifold $Y$ and an orientation reversing involution $\iota:Y\to Y$ with isolated fixed points such that $X$ is homeomorphic to the quotient $Y/\iota$. 
 \end{lemma}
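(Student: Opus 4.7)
The plan is to construct $Y$ as the orientation double cover of the topological manifold part of $X$, completed by filling in the singular cone points with standard balls, and to take $\iota$ to be the covering involution extended by the antipodal map on each ball. This parallels the informal description given just before the lemma; the task is to make the three ingredients precise and to check the required properties of $\iota$.

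First I would use Perelman's conical neighborhood theorem together with the observation preceding the lemma to pin down the local topology at the non-manifold points: they form a finite set $\{p_1,\ldots,p_k\}$, each with a neighborhood homeomorphic to $C(\RP^2)$. Removing open cone neighborhoods of the $p_i$ gives a compact topological 3-manifold $M_0 \subset X$ with $\partial M_0 \cong \bigsqcup_{i=1}^{k} \RP^2$, and $X$ is recovered by gluing a cone over $\RP^2$ back to each boundary component.

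Next I would observe that $M_0$ is non-orientable, since each boundary component admits a collar neighborhood homeomorphic to $\RP^2 \times [0,1)$, which is non-orientable. As $M_0$ is connected (because $X$ is), its orientation double cover $\pi : \tilde{M}_0 \to M_0$ is a nontrivial connected double cover with deck involution $\tilde{\iota}$. On each boundary component this restricts to the orientation cover of $\RP^2$, namely the connected double cover $\s^2 \to \RP^2$, so $\tilde{M}_0$ is a compact orientable 3-manifold with boundary $\bigsqcup_{i=1}^{k}\s^2$. Capping each boundary sphere with a standard 3-ball $D^3_i$ then produces a closed orientable 3-manifold $Y$.

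Finally I would extend $\tilde{\iota}$ to all of $Y$ by declaring $\iota$ to act as the antipodal map $x \mapsto -x$ on each $D^3_i \subset \mathbb{R}^3$, which matches $\tilde{\iota}$ on the common boundary $\s^2_i$. Then $\iota$ is orientation-reversing on $\tilde{M}_0$ (being the nontrivial deck transformation of the orientation cover) and on each $D^3_i$ (the antipodal map on $\mathbb{R}^3$ has determinant $(-1)^3=-1$), its fixed-point set is exactly the $k$ ball centers and so is finite, and $Y/\iota$ is $M_0$ with each $D^3_i/(x \sim -x) \cong C(\RP^2)$ attached along $\s^2_i/\iota = \RP^2$, i.e.\ $X$. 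The only step requiring an actual check is that the orientation cover of $M_0$ restricts on each $\RP^2$ boundary component to the connected $\s^2$-cover; this follows because $w_1(M_0) \in H^1(M_0;\Z/2)$ pulls back through a collar to $w_1(\RP^2)\neq 0$, the normal bundle of $\partial M_0$ being trivial. The main obstacle I anticipate is therefore bookkeeping rather than conceptual: verifying that the gluings and the extension of $\iota$ are canonical up to isotopy and yield a topological manifold $Y$ with a continuous involution having all the stated properties.
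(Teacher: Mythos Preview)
Your construction is correct and matches the informal sketch given in the paper immediately before the lemma. The paper itself does not supply a proof of this lemma: it is quoted from \cite{GGG} (Lemma~1.7 there), and the surrounding text only records the outcome that $X$ is a compact $3$-manifold with finitely many $\RP^2$ boundary components, capped off by cones, and that the two-fold branched cover $Y$ is closed and orientable. Your proposal fills in exactly the details behind that description---orientation double cover of $M_0$, capping the resulting $\s^2$ boundary components by balls, extending the deck involution by the antipodal map---so the approaches are the same.

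One minor remark: you should state explicitly that $\tilde{\iota}$ is fixed-point free on $\tilde{M}_0$ (automatic, since it is a nontrivial deck transformation), so that the fixed-point set of $\iota$ on $Y$ is contained in the added balls and hence equals the $k$ ball centers. Everything else---the $w_1$ argument for why the orientation cover restricts to the connected cover $\s^2\to\RP^2$ over each boundary component, and the check that the antipodal extension is orientation-reversing---is fine.
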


Grove and Wilking proved that two-fold branched covers of Alexandrov $3$-spheres of nonnegative curvature with branching set an extremal knot also admit an Alexandrov metric with the same lower curvature bound as the base (cf.~\cite[Lemma~5.2.]{GW}). Their arguments also imply that the same holds for the orientable two-fold branched covers in Lemma~\ref{LEM:BR_COV} (see also \cite{HarveySearle}). For the sake of completeness, we include a proof.


 \begin{proposition}
 \label{PRO:BR_COV_GEOM}
 Let $X$ be a closed, three-dimensional Alexandrov space with curvature bounded below by $k$,  and assume that $X$ is not a topological manifold. If $Y$ is the orientable two-fold branched cover of $X$ in Lemma~\ref{LEM:BR_COV}, then the following hold:
 \begin{enumerate}
 	\item The metric in $X$ can be lifted to $Y$, so that  $Y$ is an Alexandrov  space with curvature bounded below by $k$.
	\item The involution $\iota:Y\to Y$ is an isometry and $Y/\iota$ is isometric to $X$.
	\item Let $p'\in Y$ be a fixed point of the involution $\iota:Y\to Y$. Then the space of directions $\Sigma_{p'}Y\cong \s^2$ is the canonical Alexandrov double cover of $\Sigma_{\proj(p')}X\cong \RP^2$.
 \end{enumerate}
 \end{proposition}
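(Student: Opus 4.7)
I would build the lifted metric on $Y$ as the pullback length metric along $\proj$, and then verify the three claims in turn by combining Perelman's conical neighborhood theorem with the local-to-global property of the Alexandrov condition. Let $F\subset Y$ denote the fixed-point set of $\iota$ (a finite, isolated set) and $S\subset X$ the topologically singular set, so $\proj(F)=S$ and, since $\pi_1$ of a small punctured conical neighborhood of any $p\in S$ is $\Z/2$, the restriction $\proj\colon Y\setminus F\to X\setminus S$ is a topological $2$-fold cover. Setting
\[
L_Y(\gamma):=L_X(\proj\circ\gamma),\qquad d_Y(y,y'):=\inf_\gamma L_Y(\gamma),
\]
I will verify that $d_Y$ is a genuine metric making $(Y,d_Y)$ compact and geodesic, that $\proj$ is $1$-Lipschitz and a local isometry on $Y\setminus F$, and that $\iota$ is an isometry (the identity $\proj\circ\iota=\proj$ immediately gives $L_Y\circ\iota_*=L_Y$). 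Lifting a length-minimizing curve from $\proj(y)$ to $\proj(y')$ to a curve in $Y$ starting at $y$ and ending at $y'$ or $\iota(y')$ will then yield $d_X(\proj(y),\proj(y'))=\min\{d_Y(y,y'),\,d_Y(y,\iota(y'))\}$, which is the quotient distance, so $Y/\iota$ is isometric to $X$, proving (2).

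\textbf{Curvature off $F$ and the space of directions.} On $Y\setminus F$ the map $\proj$ is a local isometry onto $X\setminus S$, so $Y\setminus F$ inherits the local four-point condition from $X$. For $p'\in F$ with $p:=\proj(p')$, every geodesic of $X$ issuing from $p$ is to be shown to admit exactly two lifts to geodesics of $Y$ issuing from $p'$ (interchanged by $\iota$), giving a $2$-to-$1$ local isometry $\Sigma_{p'}Y\to\Sigma_pX\cong\RP^2$. Since $Y$ is a topological $3$-manifold, $\Sigma_{p'}Y\cong\s^2$, so this map must be the canonical Alexandrov double cover of $\Sigma_pX$; this establishes (3), and in particular $\Sigma_{p'}Y$ carries curvature $\geq 1$.

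\textbf{Curvature at the fixed points.} For (1), I would invoke the local-to-global theorem for Alexandrov spaces, which reduces matters to verifying the four-point condition in a small neighborhood of each $p'\in F$. Given a quadruple with vertices possibly in $F$, I approximate every $F$-vertex by a sequence in $Y\setminus F$ (possible since $F$ is finite); each approximating quadruple satisfies the four-point condition at every apex by the previous paragraph, and since $\widetilde{\angle}_k$ depends continuously on the six pairwise distances, the condition passes to the limit for the original quadruple, completing (1).

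\textbf{Main obstacle.} The most delicate step is the first: one has to show that $d_Y$ separates the two preimages of a regular point (so that $d_Y$ is a genuine metric, not just a pseudo-metric) and that length-minimizers in $X$ genuinely lift to curves in $Y$ across the branch set $S$. Both rest on a careful use of Perelman's cone structure and the fundamental-group computation above. Once the pullback metric is in place, the curvature statements follow smoothly from the local-isometry property of $\proj$ off $F$ and the continuity of comparison angles.
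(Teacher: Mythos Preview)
Your overall architecture---pull back the length structure along $\proj$, verify that $\proj$ is a local isometry off the branch set, analyze the space of directions at a fixed point, and then globalize---matches the paper's. Parts (2) and (3) are handled essentially as in the paper. The gap is in your argument for (1), specifically in the step ``curvature at the fixed points''.

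You correctly observe that $\proj$ is a local isometry on $Y\setminus F$, so every point of $Y\setminus F$ has a $k$-domain neighborhood. You then want to show that each $p'\in F$ also has a $k$-domain neighborhood, and you propose to do this by approximating any quadruple with a vertex at $p'$ by quadruples lying entirely in $Y\setminus F$. The trouble is that you are assuming the four-point inequality already holds for \emph{every} quadruple contained in a punctured ball $B(p',r)\setminus\{p'\}$, and this is precisely what has not been established. The local isometry only tells you that the four-point condition holds in small balls around points of $Y\setminus F$, and those balls shrink to zero as you approach $p'$; a quadruple in $B(p',r)\setminus\{p'\}$ spread around $p'$ need not fit in any single such ball. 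Moreover, you cannot simply project such a quadruple to $X$ and invoke the condition there: $\proj$ is two-to-one on $B(p',r)\setminus\{p'\}$, so distinct points of the quadruple may have the same image, and in general $d_Y(a,b)>d_X(\proj(a),\proj(b))$. Thus the approximation argument is circular: it presupposes a globalization on the \emph{incomplete} space $Y\setminus F$, which the standard Toponogov globalization theorem does not provide.

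The paper handles exactly this difficulty by invoking Li's globalization theorem for incomplete length spaces. The required hypothesis is that $Y_0=Y\setminus F$ be \emph{weakly almost-everywhere convex}: for each $p$, the set of $q$ such that some minimal $d_Y$-geodesic $[pq]$ leaves $Y_0$ has measure zero. Verifying this is the heart of the paper's proof and uses a nontrivial geometric observation you did not reach: if a unit geodesic $\gamma$ in $Y$ passes through $p'\in F$ at time $0$, the directions $\gamma'(0^+),\gamma'(0^-)$ are antipodal in $\Sigma_{p'}Y$, so $\Sigma_{p'}Y$ is a spherical suspension; the involution then swaps the poles, forcing $\proj\circ\gamma(t)=\proj\circ\gamma(-t)$, which shows that from any $p\in Y$ there is at most one minimal geodesic whose interior meets a given $p'\in F$. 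Hence the ``bad'' set for each $p$ is a finite union of geodesic arcs and has measure zero, so Li's theorem applies and the completion $Y$ is Alexandrov with curvature $\geq k$. Your approximation idea can be kept only as a final cosmetic step once this globalization is in hand.
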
 

Before proving the proposition, let us recall some definitions from Li \cite{Li}. Let $U$ be a length space. An open domain $\Omega\subset U$,  possibly incomplete, is called a \emph{$k$-domain} if the triangle or four-point comparison inequalities hold for any triangle or quadruple in $\Omega$. 
A length space $U$ is said to have \emph{curvature locally bounded from below by $k$} if, for any $p\in U$, there is a $k$-domain $\Omega_p\subset U$ containing $p$. Finally, we say that $U$ is \emph{weakly almost-everywhere convex} if for any $p\in U$  and any $\varepsilon >0$, there exists $p_1\in B_\varepsilon(p)$ for which the set of points $q\in U$ such that any geodesic $[p_1q]$ connecting $p_1$ to $q$ is not entirely contained in $U$ has $n$-dimensional Hausdorff measure zero.


\begin{proof}[{\bf Proof of Proposition~\ref{PRO:BR_COV_GEOM}}] Let $X$ be a closed three-dimensional Alexandrov space with curvature bounded below by $k$ and suppose that $X$ is not homeomorphic to a manifold. Then the set  $X'$ of points in $X$ whose space of directions is homeomorphic to $\mathbb{R}P^2$ is non-empty. Let $X_0=X\setminus X'$ be the set of manifold points of $X$. 

First, observe that $X_0$ is a convex subset of $X$. To see this, fix points $p,q\in X_0$ and join them by a minimal geodesic $\gamma$ in $X$. Let $x$ be a point in the interior of $\gamma$. Then $\Sigma_x$ is homeomorphic to $\s^2$, since $\mathrm{diam}\Sigma_x=\pi$ or, alternatively, because  the spaces of directions of points in the interior of a geodesic are isometric (see \cite{Petrunin-GAFA}) while the number of topologically singular points is finite. 

Let $d_{X_0}$ be the metric on $X_0$ given by the restriction of the metric on $X$ to $X_0$, i.e. $d_{X_0}=\left. d_X\right|_{X_0}$. Since $X_0\subset X$ is convex, the metric space $(X_0,d_{X_0})$ is a non-complete length space that is also a $k$-domain. On the other hand, $X_0$ is a non-orientable topological $3$-manifold. Let $Y_0$ be the orientable double cover of $X_0$ and equip $Y_0$ with the lifted metric, denoted $d_{Y_0}$. By construction, the metric $d_{Y_0}$ is locally isometric to $d_{X_0}$ and has curvature locally bounded below by $k$. Moreover,  the metric completion of $Y_0$ is homeomorphic to the two-fold branched cover $Y$ of $X$. Let $Y'$ be the preimage of $X'$, i.e. $Y'$ is the set of points that project down to points in $X$ whose space of directions is homeomorphic to $\RP^2$.

The metric space $(Y_0,d_{Y_0})$ is a length space (see \cite[Example 3.4.3]{BBI}). To prove that the metric completion $(Y,d_Y)$ of $(Y_0,d_{Y_0})$ also has curvature bounded below by $k$ we use work of Li ~\cite{Li}; more precisely, we verify that $(Y_0,d_{Y_0})$ is almost everywhere convex.

 Let $p' \in Y'\subset Y$ be a fixed point of the involution $\iota: Y \to Y$. By construction, for each geodesic $\gamma$ emanating from $\proj(p') \in X' \subset X$, there are exactly two  geodesics in the double branched cover $Y$ emanating from the point $p' \in Y'$  that lift $\gamma$. 
By looking at geodesics emanating from $p'$, we obtain the following: a locally isometric branched cover  $T_{p'}Y\to T_{\proj(p')}X$ with branching set the origin, a space of directions of $Y$ at $p'$  (as the closure under the angle distance of the unit geodesics) and a two-fold locally isometric covering map
of $\Sigma_{p'}Y$ to $\Sigma_{\proj(p')}X$ between
 the corresponding spaces of directions at $p' \in Y'$ and $\proj(p') \in X'$. This implies that $\Sigma_{p'}Y$ is an Alexandrov space with the same curvature bound as $\Sigma_{\proj(p')}X$ and proves part (3) of the proposition.

Suppose that $\gamma:[-\varepsilon,\varepsilon]\to Y$ is a unit geodesic with $\gamma(0)=p' \in Y'$.
Then the directions $\gamma'(0^+)$ and $\gamma'(0^-)$ form an angle $\pi$ at $p'\in Y$. This implies that $\Sigma_{p'}Y$ is a suspension with poles the directions  $\gamma'(0^+)$ and $\gamma'(0^-)$. The involution $\iota:Y\to Y$ whose quotient is $X$ induces an isometric involution of $\Sigma_{p'}Y$, and consequently must send $\gamma'(0^+)$ to $\gamma'(0^-)$ and vice versa. This implies that $\proj\circ\gamma(t)=\proj\circ\gamma(-t)$, for small $t$. As a consequence, it follows that for any $p\in Y$ there is at most one geodesic $\gamma:I\to Y$ starting at $p$ that contains a point $p' \in Y'$ in its interior.

Given $p\in Y_0$, let $\mathcal{C}_p=\{q\in Y_0 \mid [pq]\subset Y_0\}$. Then $Y_0\setminus \mathcal{C}_p$ is the set of points in $Y_0$ that are connected to $p$ via a minimal geodesic with respect to $d_Y$ that contains some point $p'\in Y'$ in its interior. Therefore, $Y_0\setminus \mathcal{C}_p$ is composed of ``lifts'' (``reflection'') of a geodesic joining $\proj(p)\in X$ with different singular points $\proj(p')\in X'$.  Since the covering map is a local isometry almost everywhere, it follows that these geodesic arcs have measure zero, and hence $Y_0\setminus \mathcal{C}_p$ has measure zero. Hence, by \cite[Corollary 0.1]{Li}, the metric completion $(Y,d_Y)$ is an Alexandrov space with curvature bounded below by $k$. This proves part (1) of the proposition.

To see that the involution $\iota:Y\to Y$ is a global isometry observe first that, by construction, $\iota$ is a bijective map. Since the branched cover $\proj:Y\to X$ is a local isometry everywhere, the involution $\iota:Y\to Y$ is also a local isometry, and hence a global isometry. This proves part (2) of the proposition.
\end{proof}


\subsection{Lower Ricci curvature bounds for metric-measure spaces}
\label{SEC:WEAK_RICCI}
In this subsection we turn our attention to general metric measure spaces with weak lower Ricci curvature bounds. In the first part we recall several notions of such curvature bounds and in the second we consider their behavior under lifts and coverings. We follow the approach of Lott-Sturm-Villani via optimal transport  applied to the $n$-dimensional Hausdorff measure, which is the natural reference measure on an $n$-dimensional Alexandrov space (see \cite{MondinoCavalletti}). For a different definition of lower Ricci curvature bound for Alexandrov spaces, conjecturally equivalent to the $\CD(K,N)$ definition, see \cite{ZhuZhang}.


\subsubsection{Reduced Curvature Dimension Condition}
\label{SEC:CDSTAR}
Let $(X,d,m)$ denote a metric measure space consisting of a complete, separable metric space $(X, d)$ and a locally finite measure $m$ on $(X,\mathcal{B}(X))$, that is, the volume $m(B_r (x))$ of balls is finite for all $x \in X$ and all sufficiently small $r >0$.

We denote by $(P^2(X), W_2)$ the $L^2$-Wasserstein space of probability measures $\mu$ on $(X,\mathcal{B}(X))$ with finite second moments. The $L^2$-Wasserstein distance $W_2(\mu_0, \mu_1)$ between two probability measures $\mu_0, \mu_1 \in P^2(X)$ is defined as
\[
W_2(\mu_0, \mu_1) = \inf \left(\int_{X \times X} d^2(x,y) ~d\pi(x,y)\right)^{1/2}, 
\]
where the infimum is taken over all couplings $\pi$ of $\mu_0$ and $\mu_1$ which are probability measures on $X \times X$ with marginals $\mu_0$ and $\mu_1$. The subspace of $m$-absolutely continuous measures is denoted by $\cP^2(X, d,m)$ and the subspace of $m$-absolutely continuous measures with bounded support is denoted  by $\cP_{\infty}(X, d,m)$.

The  $\CD^*(K,N)$ condition for $(X,d,m)$ is stated in terms of convexity of a certain entropy functional evaluated along geodesics in the Wasserstein space \cite{LottVillani, SturmI, SturmII}. For general $K \in \RR$ and $N \in (1,\infty]$, we introduce the following volume distortion coefficients. For $\theta \in \R^+$ and $t \in [0,1]$, set
\[
\sigma^{(t)}_{K,N}(\theta) :=
\begin{cases}
\quad\quad\quad \infty & \text{ if } K \theta^2 \geq N \pi^2,\\[.1cm]
\dfrac{\sin \left(t \theta \sqrt{K/N}\right)}{\sin \left( \theta \sqrt{K/N}\right)} & \text{ if } 0 < K\theta^2 < N\pi^2,\\[.1cm]
\quad\quad\quad t & \text{ if } K = 0,\\[.1cm]
\dfrac{\sinh \left(t \theta \sqrt{-K/N}\right)}{\sinh \left( \theta \sqrt{-K/N}\right)} & \text{ if } K < 0.  
\end{cases}
\]


\begin{definition}[{\em Reduced $(K,N)$-Curvature Dimension condition}]
\label{DEF:CDSTAR}
A metric measure space $(X,d,m)$ verifies $\CD^*(K,N)$ if and only if for each pair of measures $\mu_0, \mu_1 \in \cP_{\infty}(X,d,m)$ there exits an optimal coupling $\pi$ of $\mu_0=\rho_0m$ and $\mu_1= \rho_1m$ and a geodesic $\mu_t = \rho_t m \in \cP_{\infty}(X,d,m)$ such that
\be
\label{CD}
-\int_{X} \rho_t^{1-1/N'} dm \leq -\int_{X \times X} \left[\sigma_{K,N'}^{(1-t)}d(x_0,x_1)\rho_0^{-1/N'}(x_0) + 
 \sigma_{K,N'}^{(t)}d(x_0,x_1)\rho_{1}^{-1/N'}(x_1)\right] d\pi(x_0,x_1)
\ee
for all $t \in [0,1]$ and $N' \geq N$. 
\end{definition}

The original $\CD(K,N)$ condition of Lott-Sturm-Villani was defined in a similar way, replacing the volume distortion coefficients $\sigma^{(t)}_{K,N}(\cdot)$ and $\sigma^{(1-t)}_{K,N}(\cdot)$ above by the slightly larger coefficients $\tau^{(t)}_{K,N}(\cdot)$ and $\tau^{(1-t)}_{K,N}(\cdot)$, respectively, where 
\[
\tau^{(t)}_{K,N} (\theta) := t^{1/N} \sigma^{(t)}_{K,N-1}(\theta)^{(N-1)/N}.
\]
For $K>0$,
\[
\CD(K,N) \implies \CD^*(K,N) \implies \CD(K^*, N),
\]
with  $K^*=K(N-1)/N$ (see \cite{BacherSturm}). Note that, when $K=0$, the condition $\CD^*(0,N)$ is equivalent to $\CD(0,N)$ since $\tau^{(t)}_{0,N} (\theta) = \sigma^{(t)}_{0,N}  = t$.

 The reduced curvature-dimension condition $\CD^*(K,N)$ was introduced by Bacher-Sturm \cite{BacherSturm} because it satisfies the local-to-global property and tensorization for non-branching spaces. 
A metric measure space $(X, d, m)$ is \emph{non-branching} if $(X, d)$ is a geodesic metric space such that, given any $4$-tuple $(z, x_0, x_1, x_2)$ of points in $X$, if $z$ is both the midpoint of $[x_0x_1]$ and of $[x_0x_2]$, then $x_1 = x_2$.


\begin{theorem} [\protect{see \cite[Theorem 5.1]{BacherSturm}}]
Let $K,N \in \mathbb{R}$ with $N \geq 1$ and let $(X, d,m)$ be a non-branching metric measure space. Assume, additionally, that $\cP_{\infty}(X, d,m)$ is a geodesic space. Then $(X, d,m)$ satisfies $\CD^*(K,N)$ globally if and only if it satisfies $\CD^*(K,N)$ locally.
\end{theorem}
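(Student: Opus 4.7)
The plan is to prove the non-trivial implication---local $\CD^*(K,N)$ implies global $\CD^*(K,N)$---since the converse is immediate (a global inequality restricts to any neighborhood). Given $\mu_0,\mu_1\in\cP_\infty(X,d,m)$, use the assumption that $\cP_\infty(X,d,m)$ is a geodesic space to obtain a Wasserstein geodesic $(\mu_t)_{t\in[0,1]}$, $\mu_t=\rho_t m$, together with an optimal dynamical plan lifting it. Then choose a sufficiently fine partition $0=t_0<t_1<\cdots<t_m=1$ so that, for each $i$, the pair $(\mu_{t_i},\mu_{t_{i+1}})$ is concentrated on a small set where the local $\CD^*(K,N)$ hypothesis applies; this is possible since $t\mapsto\mu_t$ is $W_2$-continuous and the underlying dynamical plan is supported on a precompact set of geodesics.

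The crucial algebraic ingredient is a semigroup-type identity satisfied by the reduced distortion coefficients $\sigma^{(t)}_{K,N}$: if one splits $[0,1]$ at a point $s\in(0,1)$ and applies (\ref{CD}) on $[0,s]$ and on $[s,1]$ separately, the resulting product of $\sigma$-coefficients recombines, in the limit as the partition is refined, into the single coefficient $\sigma^{(t)}_{K,N'}(d(x_0,x_1))$ that appears in the global statement. This factorization is satisfied by the $\sigma$-coefficients but \emph{not} by the $\tau$-coefficients of the Lott-Sturm-Villani formulation, and it is precisely the reason why $\CD^*(K,N)$ admits a local-to-global principle whereas $\CD(K,N)$ does not. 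The non-branching hypothesis enters to guarantee that the restriction of the dynamical plan to $[t_i,t_{i+1}]$ is itself the unique optimal dynamical plan between $\mu_{t_i}$ and $\mu_{t_{i+1}}$, so the local hypothesis is being applied to exactly the same intermediate measures that are then being telescoped.

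The main obstacle will be executing the telescoping quantitatively: one substitutes the local bound on $\int \rho_{t_i}^{1-1/N'}\,dm$ obtained on $[t_{i-1},t_i]$ into the local bound obtained on $[t_i,t_{i+1}]$ and, by induction on $i$, verifies that the accumulated product of $\sigma$-factors agrees with the required global distortion coefficient. After this, a limiting argument as the mesh of the partition tends to zero yields the global inequality (\ref{CD}) for all $t\in[0,1]$ and all $N'\geq N$. One also needs to check that $\mu_t\in\cP_\infty(X,d,m)$ for every intermediate $t$ so that (\ref{CD}) is well posed; this follows from the non-branching assumption together with standard properties of displacement interpolation, and the quantifier over $N'\geq N$ is propagated automatically because the local hypothesis already carries it uniformly.
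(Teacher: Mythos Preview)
The paper does not supply its own proof of this statement; it is quoted verbatim from \cite[Theorem~5.1]{BacherSturm} and used as a black box. There is therefore no in-paper argument to compare against, and what follows is an assessment of your sketch relative to the Bacher--Sturm proof it is meant to summarize.

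Your outline captures the architecture correctly: the non-trivial direction is local-to-global, one lifts a Wasserstein geodesic to an optimal dynamical plan, non-branching forces the restriction of the plan to any subinterval to be the unique optimal plan between the corresponding intermediate marginals, and the distortion coefficients obey the exact composition law
\[
\sigma^{(\lambda)}_{K,N'}(t\theta)\,\sigma^{(t)}_{K,N'}(\theta)=\sigma^{(\lambda t)}_{K,N'}(\theta),
\]
which is what makes the telescoping close up (and which fails for the $\tau$-coefficients). This identity is exact, not asymptotic, so no ``limit as the partition is refined'' is needed at the algebraic level; the limiting step in Bacher--Sturm is only to pass from dyadic $t$ to all $t\in[0,1]$ by continuity.

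There is, however, a genuine gap in your plan. You propose to refine the \emph{time} partition so that ``the pair $(\mu_{t_i},\mu_{t_{i+1}})$ is concentrated on a small set where the local $\CD^*(K,N)$ hypothesis applies,'' invoking $W_2$-continuity of $t\mapsto\mu_t$. But refining in time shortens transport distances; it does \emph{not} shrink the supports of the $\mu_{t_i}$, and the local hypothesis requires both endpoint measures to be supported inside a single neighborhood of the prescribed cover. The Bacher--Sturm argument therefore performs an additional \emph{spatial} decomposition: the optimal coupling is partitioned into finitely many pieces, each supported in a product of small sets on which the local condition is available; the local inequality is applied to each piece separately; and the pieces are reassembled using concavity of $r\mapsto r^{1-1/N'}$ together with the non-branching-induced disjointness of the associated geodesic packets. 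Without this spatial step the local hypothesis cannot be invoked, and the plan as written does not go through.
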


All Alexandrov spaces $(X,d)$ are non-branching and $\cP_{\infty}(X, d,\Haus^N)$ is a geodesic space.  


\begin{remark}
It is not known whether the $\CD(K,N)$ condition possesses the local-to-global property, see \cite{DengSturm} in regards to tensorization of $\CD(K,N)$-spaces. 
\end{remark}

Another notable extension of the original curvature-dimension condition came from Ambrosio, Gigli and Savar\'{e} \cite{AGS}, who introduced a more restrictive condition which retains the stability under measured Gromov-Hausdorff convergence but rules out Finsler geometries.  This stronger condition requires that the space is also infinitesimally Hilbertian (equivalently, the Laplacian on $X$ is a linear operator). Such spaces are denoted $\RCD(K,N)$-spaces, short for Riemannian Curvature Dimension condition. Naturally, an infinitesimally Hilbertian $\CD^*(K,N)$ space is denoted a $\RCD^*(K,N)$ space. In \cite{GKO}, Gigli, Kinderlehrer and Ohta proved linearity of the Laplacian on Alexandrov spaces; thus, all $\CD^*(K,N)$-Alexandrov spaces are $\RCD^*(K,N)$ metric measure spaces. The reverse implication is unknown, though it is conjectured to be true in dimension two. 


\begin{conjecture}[K.T.~Sturm]
Any $\RCD^*(K,2)$ metric measure space is an Alexandrov space of curvature bounded below.
\end{conjecture}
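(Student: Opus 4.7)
The plan is to combine the rigid structure of low-dimensional $\RCD^*$ spaces with the classification of $1$-dimensional synthetic-curvature spaces. In the smooth two-dimensional setting, Ricci and sectional curvatures coincide, so the conjecture predicts that an $\RCD^*(K,2)$ space is a topological surface with conical singularities whose total angles are bounded by $2\pi$; this is exactly what Alexandrov curvature bounded below amounts to in dimension two.

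First I would analyze the tangent cones. Using the essential-dimension theorem of Br\'ue--Semola together with the cone rigidity results for $\RCD$ spaces (De Philippis--Gigli), the tangent cone $T_pX$ at $\Haus^2$-a.e.\ $p$ is Euclidean $\R^2$. At the remaining points $T_pX$ is still a metric cone over an $\RCD$-type cross-section $\Sigma_pX$ of Hausdorff dimension one; the classification of one-dimensional spaces with synthetic lower Ricci bound (Kitabeppu--Lakzian) then forces $\Sigma_pX$ to be a circle (or an interval, along a potential boundary), whose length $\ell$ must satisfy $\ell \leq 2\pi$ by the Bishop--Gromov volume comparison on small balls around $p$.

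Next I would promote this infinitesimal cone-angle bound to genuine angle comparison. A flat cone over a circle of length $\ell \leq 2\pi$ is itself an Alexandrov space of nonnegative curvature, so all tangent cones are Alexandrov with a uniform lower bound. Combining the Laplacian comparison for distance functions on $\RCD^*(K,N)$ spaces with the gradient-flow structure guaranteed by infinitesimal Hilbertianity, one would show that the family of geodesics leaving $p$ behaves angularly as in $T_pX$, yielding the four-point inequality of Definition~\ref{DEF:ALEX} in a small neighborhood of $p$. A local-to-global argument in the spirit of Burago--Gromov--Perelman then turns the local Alexandrov bound into a global one.

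The main obstacle, and the reason the statement is still only a conjecture, is the transition from \emph{analytic} Ricci control (Bochner inequality, Laplacian comparison, heat-kernel estimates) to \emph{synthetic} Toponogov-type triangle comparison. The $\RCD$ framework furnishes integrated and almost-everywhere information, whereas Alexandrov's condition requires pointwise control of every geodesic triangle. Bridging this gap, even in dimension two where the target structure is maximally rigid, will likely require genuinely new tools: perhaps a quantitative $\varepsilon$-regularity theorem identifying the non-regular set as isolated cone points, or an isothermal-type parametrization specific to surfaces that extracts the Euclidean model from the Hilbertian and curvature-dimension hypotheses alone.
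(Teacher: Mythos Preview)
The paper contains no proof of this statement: it is recorded as an open conjecture attributed to K.~T.~Sturm, with no argument offered. There is therefore nothing in the paper to compare your proposal against.

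Your write-up is a heuristic outline rather than a proof, and you yourself acknowledge this in the final paragraph. The ingredients you list (tangent-cone analysis via Bru\'e--Semola and De~Philippis--Gigli, the Kitabeppu--Lakzian one-dimensional classification, Bishop--Gromov to bound cone angles) are the natural ones, and the obstacle you isolate---passing from measure-theoretic/analytic Ricci control to pointwise triangle comparison---is precisely the reason the conjecture is still open. But as a proof the proposal has a genuine gap at the step ``Combining the Laplacian comparison\ldots one would show that the family of geodesics leaving $p$ behaves angularly as in $T_pX$, yielding the four-point inequality'': nothing you cite actually delivers Toponogov comparison from $\RCD^*$ hypotheses, and this is not a matter of filling in routine details. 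So your proposal should be read as a plausible strategy with an honestly identified missing idea, not as a proof; since the paper does not claim a proof either, there is no discrepancy to report.
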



\subsubsection{Lifts of metric measure spaces}
\label{SEC:MMS_LIFTS} 
We conclude this section by recalling properties of universal coverings of metric measure spaces following \cite[Section 7]{BacherSturm}. Consider a metric measure space $(X,d,m)$ which satisfies the reduced curvature-dimension $\CD^*(K,N)$ locally for $K\geq 1$ and $N \geq 1$. The existence of a universal cover is guaranteed provided $X$ is connected, locally path connected, and semi-locally simply connected {\color{blue} \cite[Ch.~V, Theorem~10.2]{Massey}}. Let $\mathsf{p}: \widetilde{X} \to X$ denote such a universal cover and its covering map. Then $\widetilde{X}$ inherits a length structure from $X$ as follows:


\begin{definition}
A curve $\widetilde{\gamma}$ in $\widetilde{X}$ is called \emph{admissible} if its composition with $\mathsf{p}$ is a continuous curve in $X$. 
\end{definition}

The length of an admissible curve in $\widetilde{X}$ is set to be $\text{Length}(\widetilde{\gamma}) = \text{Length}(\mathsf{p} \circ \gamma)$ with respect to $X$. For two points $x,y \in \widetilde{X}$, define the associated distance $\widetilde{d}$ on $\widetilde{X}$ by 
\[
\widetilde{d}(x,y) = \inf \{\,\text{Length}(\widetilde{\gamma}) ~|~ \widetilde{\gamma}: [0,1] \to \widetilde{X} \text{ is admissible}, \widetilde{\gamma}(0) = x, \widetilde{\gamma}(1) = y\,\}.
\]
Let $\xi$ denote the family of all sets $\widetilde{E} \subset \widetilde{X}$ such that the restriction of $\mathsf{p}$ to $\widetilde{E}$ is a local isometry from $\widetilde{E}$ to a measurable set $E \subset X$. We define a function $\widetilde{m}: \xi \to [0, \infty)$ by
\[
\widetilde{m}(\widetilde{E}) = m(p(\widetilde{E})) = m(E)
\]
and extend $\widetilde{m}$ in a unique way to all Borel measurable sets on $\widetilde{X}$. We call the metric measure space $(\widetilde{X}, \widetilde{d}, \widetilde{m})$ the \emph{lift} of $(X,d,m)$. Because the construction of $\widetilde{m}$ and $\widetilde{d}$ is local, the properties of $(X,d,m)$ are lifted directly to $(\widetilde{X}, d,m)$. We summarize this in the following result.


\begin{theorem}[cf.~{\cite[Theorem 7.10]{BacherSturm}}]
Assume that $(X,d,m)$ is a non-branching metric measure space satisfying $\CD^*(K,N)$ locally for $K \in \mathbb{R}$ and $N \geq 1$ and that $(X,d)$ is semi-locally simply connected. Let $\widetilde{X}$ be the universal covering of $X$. Then $(\widetilde{X}, \widetilde{d}, \widetilde{m})$ satisfies $\CD^*(K,N)$ locally as well. 
\end{theorem}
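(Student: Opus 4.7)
The plan is to exploit the fact that the covering projection $\mathsf{p}\colon\widetilde X \to X$ is by construction a local isometry which preserves the measure locally, so the local curvature--dimension inequality on $X$ can be pulled back along $\mathsf{p}$. I would fix $\tilde x\in\widetilde X$, set $x:=\mathsf{p}(\tilde x)$, and, using semi-local simple connectedness together with the local $\CD^*(K,N)$ hypothesis on $X$, choose a simply connected, evenly covered open neighborhood $\widehat U$ of $x$ and a smaller ball $U\subset\widehat U$ centered at $x$ such that (a) local $\CD^*(K,N)$ holds for any pair of measures supported in $U$, and (b) every minimizing geodesic between two points of $U$ is contained in $\widehat U$; property (b) can be arranged by taking the radius of $U$ strictly less than one third of the distance from $x$ to $X\setminus\widehat U$. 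Letting $\widetilde U\subset\widehat{\widetilde U}$ denote the sheets over $U\subset\widehat U$ containing $\tilde x$, the restrictions of $\mathsf{p}$ to these sheets are bijective isometries and, by the very definition of $\widetilde m$, measure preserving.

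Given $\widetilde\mu_0,\widetilde\mu_1\in\cP_\infty(\widetilde X,\widetilde d,\widetilde m)$ with supports in $\widetilde U$, I would push them down to $\mu_i:=(\mathsf{p}|_{\widetilde U})_*\widetilde\mu_i\in\cP_\infty(X,d,m)$ and invoke local $\CD^*(K,N)$ on $X$ to obtain an optimal coupling $\pi$ of $(\mu_0,\mu_1)$ and a Wasserstein geodesic $(\mu_t)_{t\in[0,1]}$ along which~(\ref{CD}) holds. Using that $X$ is non-branching, represent these data by an optimal dynamical plan $\Pi$ concentrated on minimizing geodesics of $X$, with $(e_0,e_1)_*\Pi=\pi$ and $(e_t)_*\Pi=\mu_t$. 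By property (b) and the path-lifting property of the covering, every $\gamma\in\supp\Pi$ lifts uniquely to a curve $\widetilde\gamma$ contained in $\widehat{\widetilde U}$ and starting at the preimage of $\gamma(0)$ in $\widetilde U$; the inequality $\widetilde d\ge d\circ\mathsf{p}$ (immediate from the definition of $\widetilde d$) combined with the identity of lengths then forces $\widetilde\gamma$ to be a minimizing geodesic in $(\widetilde X,\widetilde d)$. Set $\widetilde\Pi$ to be the pushforward of $\Pi$ under this lifting map and $\widetilde\mu_t:=(e_t)_*\widetilde\Pi$. Since $\mathsf{p}$ is locally a measure-preserving isometry, the densities $\widetilde\rho_t$ of $\widetilde\mu_t$ are obtained pointwise from $\rho_t$, so the $\CD^*(K,N)$ inequality on $X$ transfers verbatim to the analogous inequality on $\widetilde X$, giving the local $\CD^*(K,N)$ condition at $\tilde x$.

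The main technical obstacle is making sure that the endpoint marginal $(e_1)_*\widetilde\Pi$ is actually $\widetilde\mu_1$ rather than some deck-translate of it supported over $U$ in a different sheet; condition (b) above is precisely what rules this possibility out, since any $\gamma\in\supp\Pi$ is then confined to the simply connected set $\widehat U$ and its lift stays in the single sheet $\widehat{\widetilde U}$, terminating at the preimage of $\gamma(1)$ in $\widetilde U$. Once this compatibility is in place, absolute continuity of $\widetilde\mu_i$ with respect to $\widetilde m$, optimality of the lifted coupling, and the transfer of the entropy inequality all follow from the fact that the covering map, endowed with the lifted metric and measure, is sheet-by-sheet a measure-preserving isometry.
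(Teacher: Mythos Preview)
The paper does not give its own proof of this theorem; it is quoted verbatim from Bacher--Sturm \cite[Theorem~7.10]{BacherSturm} and only applied afterwards (to conclude that the universal cover of a $\CD^*(K,N)$-Alexandrov space is again a $\CD^*(K,N)$-Alexandrov space). So there is no proof in the paper to compare against.

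That said, your proposal is correct and is essentially the argument Bacher--Sturm themselves give: the covering projection $\mathsf p$ is, by the very construction of $\widetilde d$ and $\widetilde m$ recalled in Section~\ref{SEC:MMS_LIFTS}, a sheetwise measure-preserving isometry, so the local $\CD^*(K,N)$ data can be transported through the sheets. The points you isolate --- choosing a simply connected evenly covered $\widehat U$ and a smaller $U$ so that minimizing geodesics between points of $U$ remain in $\widehat U$; representing the Wasserstein geodesic on $X$ by a dynamical plan and lifting it curve by curve via unique path-lifting; using $\widetilde d\ge d\circ\mathsf p$ together with equality of lengths to see that lifts are again minimizing; and checking that the terminal marginal lands in the correct sheet --- are exactly the technical issues, and your treatment of each is sound. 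Optimality of the lifted coupling and the transfer of inequality~\eqref{CD} then follow, as you say, from the sheetwise isomorphism of metric measure spaces.
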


Note that Perelman's conical neighborhood theorem \cite{Perelman-Morse} implies that any Alexandrov space $X$ has a universal covering $\widetilde{X}$ and the Hausdorff measure of $X$  lifts to the Hausdorff measure of $\widetilde{X}$. Thus, the universal cover of any $\CD^*(K,N)$-Alexandrov space is again a $\CD^*(K,N)$-Alexandrov space. 


\section{Properties of $\CD^*(K,N)$-Alexandrov spaces.}
\label{SEC:Alex+Ricci}

In this section, we provide a few facts of $\CD^*(K,N)$-Alexandrov spaces needed later on. Recall first the following non-smooth shortening lemma of Villani (cf. \cite[Theorem 8.22]{Villani}).


\begin{lemma}
\label{LEM:VILLANI_TWO_GEODESICS}
Let $(X,d)$ be a metric space, and let $\gamma_1, \gamma_2$ be two (constant-speed) minimizing geodesics such that 
\[
d(\gamma_1(0), \gamma_1(1))^2+d(\gamma_2(0), \gamma_2(1))^2\leq d(\gamma_1(0), \gamma_2(1))^2+d(\gamma_2(0), \gamma_1(1))^2
\]
Let $L_1$ and $L_2$ stand for the respective lengths of $\gamma_1$ and $\gamma_2$, and let $D$ be a bound on the diameter of $(\gamma_1\cup\gamma_2)([0,1])$. Then, for any $t_0\in(0,1)$,
\[
|L_1-L_2|\leq \frac{C\sqrt{D}}{\sqrt{t_0(1-t_0)}}\sqrt{d(\gamma_1(t_0), \gamma_2(t_0))},
\]
for some numeric constant $C$.
\end{lemma}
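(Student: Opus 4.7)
The plan is a short triangle-inequality plus quadratic-expansion argument, in the spirit of Villani's proof. Set $L_i := d(\gamma_i(0), \gamma_i(1))$ (the minimising property makes these the lengths of the $\gamma_i$), and $\delta := d(\gamma_1(t_0), \gamma_2(t_0))$. Because each $\gamma_i$ has constant speed and is minimising, routing the triangle inequality through time $t_0$ gives
\[
d(\gamma_1(0), \gamma_2(1)) \le t_0 L_1 + \delta + (1-t_0)L_2, \qquad d(\gamma_2(0), \gamma_1(1)) \le t_0 L_2 + \delta + (1-t_0)L_1.
\]

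I would then substitute these bounds into the cross inequality that appears in the hypothesis. Abbreviating $a := t_0 L_1 + (1-t_0)L_2$ and $b := t_0 L_2 + (1-t_0)L_1$, the hypothesis becomes
\[
L_1^2 + L_2^2 \;\le\; (a+\delta)^2 + (b+\delta)^2 \;=\; a^2 + b^2 + 2\delta(a+b) + 2\delta^2.
\]
The crucial algebraic observation, using $t_0^2 + (1-t_0)^2 = 1 - 2t_0(1-t_0)$, is the identity
\[
L_1^2 + L_2^2 - (a^2 + b^2) \;=\; 2\,t_0(1-t_0)(L_1 - L_2)^2,
\]
which is precisely what conjures up the $t_0(1-t_0)$ factor present in the denominator of the conclusion.

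Finally, using $a+b = L_1 + L_2 \le 2D$ and $\delta \le D$ (both consequences of the diameter bound on $(\gamma_1\cup\gamma_2)([0,1])$), rearrangement yields
\[
2\,t_0(1-t_0)(L_1 - L_2)^2 \;\le\; 2\delta(L_1+L_2) + 2\delta^2 \;\le\; 6D\delta,
\]
and taking square roots gives the stated inequality with explicit constant $C = \sqrt{3}$. I do not anticipate any genuine obstacle here: the whole argument is a direct combination of the triangle inequality with a quadratic expansion, and the only step requiring a moment's reflection is spotting the cancellation $L_1^2+L_2^2 - a^2 - b^2 = 2t_0(1-t_0)(L_1-L_2)^2$ that produces the $t_0(1-t_0)$ denominator in the final bound.
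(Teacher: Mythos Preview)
Your argument is correct, and it is essentially the same triangle-inequality plus quadratic-expansion proof given by Villani for \cite[Theorem~8.22]{Villani}; the paper does not supply its own proof of this lemma but simply quotes Villani's result. The key identity $L_1^2+L_2^2-(a^2+b^2)=2t_0(1-t_0)(L_1-L_2)^2$ and the use of $a+b=L_1+L_2\le 2D$, $\delta\le D$ are exactly the ingredients in Villani's proof, and your explicit constant $C=\sqrt{3}$ is admissible.
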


Using this lemma, we prove the following special case of Open Problem 8.21 in \cite{Villani}.


\begin{lemma}
\label{LEM:NO_COLLAPSE}
Let $(X,d)$ be an Alexandrov space with curvature bounded below by $K\in\mathbb{R}$, and let $x_1, x_2, y_1, y_2$ be four points in $X$ such that
\begin{equation}
d(x_1, y_1)^2+d(x_2, y_2)^2\leq d(x_1, y_2)^2+d(x_2, y_1)^2.
\end{equation}
Let $\gamma_1$ and $\gamma_2$ be two (constant-speed) geodesics respectively joining $x_1$ to $y_1$ and $x_2$ to $y_2$. If $\gamma_1(t_0)=\gamma_2(t_0)$ for some $t_0\in (0,1)$, then $\gamma_1(t)\equiv\gamma_2(t)$ for all $t\in[0,1]$.
\end{lemma}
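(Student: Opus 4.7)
The plan is to combine Villani's shortening lemma (Lemma~\ref{LEM:VILLANI_TWO_GEODESICS}) with the non-branching property of Alexandrov spaces recalled in Section~\ref{SEC:CDSTAR}.

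First, I would apply Villani's lemma directly to $\gamma_1$ and $\gamma_2$: the displayed cyclical-monotonicity hypothesis is exactly what Lemma~\ref{LEM:VILLANI_TWO_GEODESICS} requires, and the assumption $\gamma_1(t_0)=\gamma_2(t_0)$ makes the right-hand side of Villani's bound vanish, forcing $L_1=L_2$. Write $L:=L_1=L_2$ and $z:=\gamma_1(t_0)=\gamma_2(t_0)$, so that $d(x_i,z)=t_0L$ and $d(z,y_i)=(1-t_0)L$ for $i=1,2$.

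Next, I would combine two inequalities: the triangle inequality through $z$ gives $d(x_1,y_2)^2+d(x_2,y_1)^2 \le 2L^2$, while the cyclical-monotonicity hypothesis rearranges to $2L^2 = L_1^2+L_2^2 \le d(x_1,y_2)^2+d(x_2,y_1)^2$. Forced equality then ensures $d(x_1,y_2)=d(x_2,y_1)=L$ and, more importantly, that the concatenation $\sigma_1$ of $\gamma_1|_{[0,t_0]}$ with $\gamma_2|_{[t_0,1]}$, reparameterized to constant speed $L$ on $[0,1]$, is itself a minimizing geodesic from $x_1$ to $y_2$. By construction, $\sigma_1$ agrees with $\gamma_1$ on $[0,t_0]$ and with $\gamma_2$ on $[t_0,1]$.

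Finally, I would invoke non-branching to propagate these local agreements globally. In a non-branching space, two constant-speed minimizing geodesics on $[0,1]$ that coincide on an initial subinterval must coincide on all of $[0,1]$: at $t^{*}:=\sup\{t:\sigma_1=\gamma_1 \text{ on }[0,t]\}$, the common value $\sigma_1(t^{*})=\gamma_1(t^{*})$ is the midpoint of two minimizing sub-geodesics issuing from the same point $\sigma_1(t^{*}-\epsilon)=\gamma_1(t^{*}-\epsilon)$, so the midpoint definition of non-branching forces $\sigma_1(t^{*}+\epsilon)=\gamma_1(t^{*}+\epsilon)$, contradicting $t^{*}<1$. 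Applied to $\sigma_1$ and $\gamma_1$ this yields $\sigma_1=\gamma_1$ on $[0,1]$; applied in reverse time to $\sigma_1$ and $\gamma_2$ (which agree on $[t_0,1]$) it yields $\sigma_1=\gamma_2$ on $[0,1]$. Hence $\gamma_1=\sigma_1=\gamma_2$ on $[0,1]$.

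The main obstacle, I expect, is the propagation step: the paper's definition of non-branching is phrased only in terms of shared midpoints of two geodesics issuing from a common point, but what one really needs is the stronger consequence that coincidence on \emph{any} subinterval forces global coincidence. This must be extracted by iterating the midpoint condition, using crucially that restrictions of minimizing geodesics in Alexandrov spaces remain minimizing.
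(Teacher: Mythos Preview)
Your proposal is correct and follows essentially the same approach as the paper: apply Lemma~\ref{LEM:VILLANI_TWO_GEODESICS} to get $L_1=L_2$, combine the triangle inequality through $z$ with the cyclical-monotonicity hypothesis to force $d(x_1,y_2)=d(x_2,y_1)=L$, observe that the concatenation is a minimizing geodesic, and conclude by non-branching. The only cosmetic difference is that you compare the single concatenation $\sigma_1$ to both $\gamma_1$ (forward) and $\gamma_2$ (backward), whereas the paper first obtains $\gamma_1=\gamma_2$ on $[t_0,1]$ and then reverses time to treat $[0,t_0]$; your added justification of the propagation step (deriving ``coincidence on a subinterval $\Rightarrow$ global coincidence'' from the midpoint formulation) is a welcome elaboration of what the paper invokes in one line.
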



\begin{proof}
Lemma~\ref{LEM:VILLANI_TWO_GEODESICS} implies that the geodesics $\gamma_1$ and $\gamma_2$ have the same length $L$.
By the triangle inequality, we have
\begin{align*}
d(x_1, y_2) & \leq d(x_1, \gamma_1(t_0))+d(\gamma_1(t_0), y_2)=t_0L+(1-t_0)L=L,\\
d(x_2, y_1) & \leq d(x_2, \gamma_2(t_0))+d(\gamma_2(t_0), y_1)=t_0L+(1-t_0)L=L.
\end{align*}
These inequalities imply that
\begin{align*}
2L^2  & =d(x_1, y_1)^2+d(x_2, y_2)^2\\
		& \leq d(x_1, y_2)^2+d(x_2, y_1)^2\\
		& \leq 2L^2.
\end{align*}
Hence $d(x_1, y_2)=d(x_2, y_1)=L$.
It follows that $\gamma_1([0, t_0])\cup \gamma_2([t_0, 1])$ is a minimal geodesic from $x_1$ to $y_2$. Since geodesics in an Alexandrov space do not branch, we must have that $\gamma_1=\gamma_2$ on $[t_0,1]$. To see that  $\gamma_1=\gamma_2$ on $[0,t_0]$, consider  the geodesics 
\begin{align*}
	\widetilde{\gamma}_1 & = \gamma_1(1-t),\quad t\in [0,1], \\
	\widetilde{\gamma}_2 & = 	\begin{cases}
										& \gamma_1(1-t), \quad t\in [0,1-t_0]\\
										& \gamma_2(1-t),\quad t\in [1-t_0,1]
									  	\end{cases}.
\end{align*}
Once again, non-branching implies that $\gamma_1(1-t)=\gamma_2(1-t)$ for $t\in [1-t_0,1]$. That is, $\gamma_1=\gamma_2$ on $[0,t_0]$, as we wanted to show. 
\end{proof}

We use Lemma~\ref{LEM:NO_COLLAPSE} to prove the following result, which will play a crucial role in the proof of Theorem~\ref{THM:RICCI_POS}.


\begin{proposition}
\label{PRO:CD_FILL}
Let $X$ be an Alexandrov space and fix $x_o \in X$. For $K \in \RR$ and $N > 1$, if $X \setminus \{x_o\}$ is a $\CD^*(K,N)$-Alexandrov space, then so is $X$. 
\end{proposition}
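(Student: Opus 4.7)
The key observation is that, for $N>1$, the singleton $\{x_o\}$ has $\Haus^N$-measure zero, so it is invisible to absolutely continuous measures on $X$. The strategy is therefore to lift the transport data guaranteed by the $\CD^*(K,N)$ hypothesis on $X\setminus\{x_o\}$ to the full space $X$ and to check that the convexity inequality \eqref{CD} is preserved verbatim; the role of the Alexandrov hypothesis will be to ensure that no "shortcut" through $x_o$ spoils this transfer.

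\textbf{Step 1: descend to the puncture.} Fix $\mu_0=\rho_0\Haus^N$ and $\mu_1=\rho_1\Haus^N$ in $\cP_\infty(X,d,\Haus^N)$. Since $\Haus^N(\{x_o\})=0$, both $\mu_i$ are concentrated on $X\setminus\{x_o\}$ and may be regarded as elements of $\cP_\infty(X\setminus\{x_o\},d,\Haus^N)$.

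\textbf{Step 2: Wasserstein distances agree.} I would show $W_2^X(\mu_0,\mu_1)=W_2^{X\setminus\{x_o\}}(\mu_0,\mu_1)$. The inequality $\leq$ is immediate, as any coupling on $X\setminus\{x_o\}$ is a coupling on $X$ with the same cost. For $\geq$, any optimal coupling on $X$ has marginals not charging $x_o$, so it is supported on $(X\setminus\{x_o\})^2$ and serves as an admissible coupling on $X\setminus\{x_o\}$ with identical cost.

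\textbf{Step 3: transfer the data.} By the hypothesis on $X\setminus\{x_o\}$, there exist an optimal coupling $\pi$ of $\mu_0,\mu_1$ and a Wasserstein geodesic $\mu_t=\rho_t\Haus^N\in\cP_\infty(X\setminus\{x_o\},d,\Haus^N)$ satisfying \eqref{CD}. By Step~2, $\pi$ remains optimal in $X$ and $\mu_t$ is a $W_2^X$-geodesic in $\cP_\infty(X,d,\Haus^N)$. Because \eqref{CD} involves only the marginals, the densities $\rho_t$, and the pointwise distance $d$, and $\Haus^N$ is unchanged outside the $\Haus^N$-null set $\{x_o\}$, the inequality transfers to $X$ as required.

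\textbf{Main obstacle.} The delicate point is whether Wasserstein geodesics in $X$ really coincide with those in $X\setminus\{x_o\}$: a priori, individual transport rays could strictly shorten their length by passing through $x_o$, and then a Wasserstein geodesic in $X$ might charge $x_o$ at an interior time. This is precisely where Lemma~\ref{LEM:NO_COLLAPSE} enters. Along a $c$-cyclically monotone coupling, if two distinct transport geodesics both met $x_o$ at the same interior time $t_0$, the $c$-monotonicity inequality together with Lemma~\ref{LEM:NO_COLLAPSE} would force them to coincide; hence at any fixed $t_0\in(0,1)$ at most one transport geodesic passes through $x_o$. Thus the set of transport pairs whose geodesic meets $x_o$ has $\pi$-measure zero, which simultaneously justifies the identification in Step~2 and lets one select a representative $\mu_t$ avoiding $x_o$ at every time, completing the proof.
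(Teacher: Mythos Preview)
Your proposal is correct and follows the same overall strategy as the paper: show that the inclusion $X_o\hookrightarrow X$ induces an isometry of Wasserstein spaces on $\cP_\infty$, and then transfer the $\CD^*$ data across this isometry. The implementation differs in two minor but noteworthy ways. First, for the inequality $W_2^X\geq W_2^{X_o}$ the paper invokes Bertrand's theorem that optimal plans on Alexandrov spaces are induced by maps, whereas your support argument (an optimal coupling on $X$ with marginals not charging $x_o$ is automatically supported on $X_o\times X_o$) is more elementary and equally sufficient. Second, the paper's write-up starts from a Wasserstein geodesic in $\cP_\infty(X)$ and uses Lemma~\ref{LEM:NO_COLLAPSE} to show it gives no mass to $x_o$ at interior times, so that its densities agree with those on $X_o$; you instead run the transfer in the cleaner direction, taking the $\CD^*$-geodesic furnished in $X_o$ and pushing it to $X$. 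In your direction Lemma~\ref{LEM:NO_COLLAPSE} is not strictly needed---once Step~2 is established, Step~3 follows immediately---so your ``Main obstacle'' paragraph, while correct, is addressing a concern that does not actually arise in your own argument (it is, however, exactly what the paper needs for its chosen direction).
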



\begin{proof}
Let $X_o = X \setminus \{x_o\}$ and note that any measure $\mu \in \cP_{\infty}(X, d, \Haus^N)$ can naturally be defined as a measure $\mu^o$ on $X_o$ by letting $\mu^o(E) := \mu(E\cap X_o)$. Observe that $\mu^o \in \cP_{\infty}(X_o, \left. d\right|_{X_o}, \left.\Haus^N\right|_{X_o})$, since $\Haus^N(\{x_o\}) = 0$. 
Let $\mu_0,\mu_1$ be measures in $\cP_{\infty}(X, d, \Haus^N)$.
Since $\Haus^N(\{x_o\}) = 0$, the measures $\mu_0^o$ and $\mu_1^o$ are absolutely continuous with respect to the Hausdorff measure on $X_o$ as well.  Let $\mu_t$ be a geodesic in $ \cP_{\infty}(X, d, \Haus^N)$ from $\mu_0$ to $\mu_1$ and let $\mu^o_t$ be the corresponding curve in $\cP_{\infty}(X_o, \left. d\right|_{X_o}, \left.\Haus^N\right|_{X_o})$ between the measures $\mu_0^o$ and $\mu_1^o$. We would like to show that $\mu^o_t$ is a geodesic between $\mu_0^o$ and $\mu_1^o$ in  $\cP_{\infty}(X_o, \left. d\right|_{X_o}, \left.\Haus^N\right|_{X_o})$.

Since $\mu_t$ is optimal, we can write $\mu_t = (e_t)_{\#}\Pi$ where $\Pi$ is a optimal dynamic transference plan between $\mu_0, \mu_1 \in \cP_{\infty}(X)$ and $e_t: \Gamma \to X$ is the usual evaluation map from the set of minimizing geodesics on $X$.  As the mass of $\mu_0$ is transported along geodesics to $\mu_1$, it follows from Lemma \ref{LEM:NO_COLLAPSE} that if any geodesics intersect, then they must coincide for all times. Therefore, $\mu_t(\{x_o\}) = 0$, for all $t \in [0,1]$, whenever $x_o \in \supp(\mu_t)$. Thus, the density of $\mu_t$ with respect to $\Haus^N$ is equal to the density of $\mu_t^o$ with respect to $\left.\Haus^N\right|_{X_o}$. That is, if $\mu_t  = \rho_t \Haus^N$ and $\mu_t^o= \rho^o_t\left.\Haus^N\right|_{X_{o}}$, then $\rho_t^o=\left.\rho_t \right|_{X_{o}}$.

Observe now that the inclusion $i:X_o\hookrightarrow X$ induces a map
\[
i_{\#}:\cP_{\infty}(X_o, W_2^{X_o})\hookrightarrow
\cP_{\infty}(X, W_2^X)
\]
via the pushforward. On the other hand, we have a map
\[
j:\cP_{\infty}(X, W_2^X)\hookrightarrow
\cP_{\infty}(X_o, W_2^{X_o})
\]
sending $\mu$ to $\mu^o$. Observe  that $j$ and $i_\#$ are inverses to each other. 

In order to prove that the  inclusion $i_\#$ is an isometry, recall that Proposition~3.8 in \cite{Bertrand} implies, since $X$ is an Alexandrov space, that there is a measurable function $F:X\to X$ with $F_{\#}\mu_0=\mu_1$ such that 
\[
W_2^X(\mu_0,\mu_1)= \left(\int_{X\times X}d^2(x,y)\,d\Pi(x,y)\right)^{1/2},
\]
with
\[
\Pi=(\mathrm{Id}, F)_{\#}\mu_0. 
\]
Observe that
\begin{equation}
\label{EQ:PUNCTURED_1}
	(\mathrm{Id},F)_\#\mu_0(\{x_o\}\times X)  = \mu_0(\{x_o\}) =0
\end{equation}
and
\begin{align}
\label{EQ:PUNCTURED_2}
	(\mathrm{Id},F)_\#\mu_0(X\times\{x_o\})  & = \mu_0(F^{-1}(\{x_o\})) \nonumber \\ 
																& = F_\#\mu_0(\{x_o\})\\ \nonumber
																& = \mu_1(\{x_o\})= 0. \nonumber
\end{align}

To adapt $F$ to $X_o$ we define a function $G: X_o\to X_o$ given by
\[
G(x)=
\begin{cases}
	F(x) & \text{if } F(x)\neq x_o\\
	x_1  & \text{if } F(x) = x_o,
\end{cases}
\]
where $x_1\neq x_o$ is an arbitrary point in $X$. 
Observe that 
\[
G_\#(\mu_0^o)=\mu_1^o.
\] 
Now let 
\[
\Pi^o=(\mathrm{Id},G)_\#(\mu_0^o)
\]
so that
\begin{equation}
\label{EQ:PUNCTURED_3}
(i\times i)_\#\Pi^o = \Pi.
\end{equation}
We have 
\begin{align}
\label{EQ:PUNCTURED_4}
	W_2^{X}(\mu_0,\mu_1) & = \left(\int_{X\times X}d^2(x,y)\,d\Pi(x,y)\right)^{1/2}\nonumber\\[.3cm]
	& =\left( \int_{X_o\times X_o}d^2(x,y)\,d\Pi^o(x,y)\right)^{1/2} \\[.3cm]
	& \geq W_2^{X_o}(\mu^o_0,\mu^o_1),\nonumber
\end{align}
where we have used Equations \eqref{EQ:PUNCTURED_1}, \eqref{EQ:PUNCTURED_2} and \eqref{EQ:PUNCTURED_3}. 

Now, let $\Lambda^o$ be a measure on $X_o\times X_o$ that realizes the distance $W^{X_o}_2(\mu^o_0,\mu_1^o)$.  Let $\Lambda$ be the measure on $X\times X$ given by 
\[
(i\times i)_\#\Lambda^o = \Lambda.
\]
A simple computation shows that the marginals of $\Lambda$ are $\mu_0$ and $\mu_1$. We have
\begin{align}
\label{EQ:PUNCTURED_5}
	W_2^{X_o}(\mu^o_0,\mu^o_1) & = \left(\int_{X_o\times X_o}d^2(x,y)\,d\Lambda^o(x,y)\right)^{1/2}\nonumber\\[.3cm]
	& =\left( \int_{X\times X}d^2(x,y)\,d\Lambda(x,y)\right)^{1/2} \\[.3cm]
	& \geq W_2^{X}(\mu_0,\mu_1),\nonumber
\end{align}
by the change of variables formula for the pushforward. It follows from Equations \eqref{EQ:PUNCTURED_4} and \eqref{EQ:PUNCTURED_5} that 
\[
W_2^{X_o}(\mu^o_0,\mu^o_1)  =   W_2^{X}(\mu_0,\mu_1).
\]
It follows that a geodesic in $\cP_{\infty}(X, W_2^{X})$ is also a geodesic in $\cP_{\infty}(X_o, W_2^{X_o})$ and viceversa. Therefore, since the geodesic $\mu^o_t=\rho^o_t\mathcal{H}^N$ satisfies condition \eqref{CD} and $\rho^o_t = \left. \rho_t\right|_{X_o}$, we have that $\mu_t=\rho_t\mathcal{H}^N$ satisfies condition \eqref{CD}. Therefore $X$ is a $\CD^*(K,N)$ space.
\end{proof}


\begin{remark}
In fact, Proposition~\ref{PRO:CD_FILL} should still hold for $X \setminus X'$ where $X'$ is any subset of zero Hausdorff measure.
\end{remark}


\section{$\CD^*(2,3)$-Alexandrov spaces}
\label{SEC:RICCI_POS}

We are now ready to show that a closed three-dimensional Alexandrov space satisfying the $\CD^*(2,3)$ condition is homeomorphic to a spherical space form or to the suspension of $\RP^2$.


\begin{proof}[\bf Proof of Theorem \ref{THM:RICCI_POS}]
Let $(X,d_X)$ be a closed three-dimensional Alexandrov space with curvature bounded below by $k \in \R$, and suppose its Hausdorff measure satisfies $\CD^*(K,3)$, for $K>0$. Note that by rescaling, we may assume $(X,d_X,\Haus^3)$ is, in fact, $\CD^*(2,3)$. Suppose first that $X$ is a topological manifold. By \cite[Theorem 7.10]{BacherSturm}, $X$ has finite fundamental group. Therefore, by Perelman's resolution of Thurston's Elliptization Conjecture, $X$ must be homeomorphic to a spherical space form. 

Suppose now that $X$ is not a topological manifold. Let $X'$ denote the points in $X$ which are topologically singular. By assumption $X'$ is nonempty and, by compactness, the set $X'$ is finite. Denote by $x_1, x_2, \dots, x_k$ the points in $X'$. After removing a neighborhood of each $x_j$, we obtain a non-orientable topological 3-manifold with $k$ boundary copies of $\RP^2$. 

Let $\proj: Y \to X$ be the orientable, double branched cover of $X$ with branching set $X'$ as in Lemma \ref{LEM:BR_COV}. By Proposition \ref{PRO:BR_COV_GEOM}, we can lift the metric $d_X$ to a metric $d_Y$ so that $Y$ is an Alexandrov space with curvature bounded below by $k$ as well. Let $y_i = \proj^{-1}(x_i)$ for $i = 1,2,\dots, k$ be the isolated fixed points of the involution $\iota: Y \to Y$ and denote this set by $Y'$. As in the proof of Proposition \ref{PRO:BR_COV_GEOM}, consider $Y_0= Y\setminus Y'$ with the restricted metric $d_{Y_0}:= d_Y |_{Y_0}$ and equipped with the lifted Hausdorff measure from $X_0$ as described in Section \ref{SEC:MMS_LIFTS}. Note that the lift of the Hausdorff measure on $X_0$ is precisely the Hausdorff measure on $Y_0$ and $(Y_0, d_{Y_0}, \Haus^3)$ is locally $\CD^*(2,3)$ since we know $(X,d_X,\Haus^3)$ is. Hence, $Y_0$ is $\CD^*(2,3)$ globally. Therefore, it follows from Proposition \ref{PRO:CD_FILL} that $Y$ is $\CD^*(2,3)$ and thus, by \cite[Theorem 7.10]{BacherSturm}, the  fundamental group of $Y$ is finite. The remainder of the proof follows as in \cite{GGG}. We include below the main ideas of the rest of the argument for completeness.

Briefly, since $\pi(Y\setminus Y') \simeq \pi(Y)$, the subgroup $\proj_*(\pi(Y\setminus Y'))$ is an index 2 subgroup in $\pi(X\setminus X')$. Thus, $\pi(X\setminus X')$ is finite.  It then follows from Epstein's theorem {\cite[Theorem 9.6]{Hempel}} and  Perelman's proof of Poincar\'e conjecture that $X\setminus X'$ is homeomorphic to $\RP^2 \times [0,1]$ and thus $k=2$. This proves the Theorem. 

\end{proof}


\section{$\CD^*(0,3)$-Alexandrov spaces}
\label{SEC:RICCI_NONNEG}
We classify now closed three-dimensional Alexandrov spaces with nonnegative Ricci curvature by following closely the argument in \cite{GGG}. In that reference the Splitting Theorem for nonnegatively curved Alexandrov spaces was a key tool. Since we work with nonnegative Ricci curvature, we must instead rely on Gigli's Splitting Theorem for $\RCD(0,N)$ metric measure spaces \cite{Gigli-Splitting} as well as Proposition \ref{PRO:CD_FILL}.


\begin{theorem}[Gigli]
\label{THM:RICCI-NONNEG}
Let $(M, d, m)$ be an $\RCD(0, N)$ space containing a line. Then $(M, d, m)$ is isomorphic to the product of the Euclidean line $(\RR, d_{\RR}, \mathcal{L}^1)$ and another space $(M', d', m')$, where the product distance $d' \times d_{\RR}$ is defined as
\[
d' \times d_{\RR}\left( (x,t),(y,s)\right) := d'(x',y') + |t-s|^2, \qquad \forall x', y' \in M' \text{ and } t,s, \in \RR.
\]
\noindent Moreover, if $N\geq 2$,  $(M',d',m')$ is a $\RCD(0,N-1)$ space. Here \emph{isomorphic} means that there is a measure preserving isometry between the spaces.
\end{theorem}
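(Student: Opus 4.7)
The plan is to follow the blueprint of the Cheeger--Gromoll splitting theorem, translated into the synthetic language of $\RCD(0,N)$ spaces, where the Riemannian calculus on the tangent bundle is replaced by the Bakry--\'Emery $\Gamma$-calculus available on infinitesimally Hilbertian spaces. So the first step is to fix a line $\gamma:\R\to M$ and, for each end, form the associated Busemann function
\begin{equation*}
b^{\pm}(x) := \lim_{t\to\pm\infty}\bigl(d(x,\gamma(\pm t))-t\bigr).
\end{equation*}
These limits exist and are $1$-Lipschitz on $M$ by the standard monotonicity estimate, and along the line itself $b^{+}(\gamma(s))+b^{-}(\gamma(s))=0$ while triangle inequality gives $b^{+}+b^{-}\ge 0$ on all of $M$. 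The goal is to upgrade this to the identity $b^{+}+b^{-}\equiv 0$ globally, which is the nonsmooth analogue of the parallelism of $\grad b^{\pm}$.

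The key analytic input is the Laplacian comparison for distance functions on $\RCD(0,N)$ spaces: a calculation using test plans (or the heat-kernel characterisation) shows that $\Delta d(\cdot,\gamma(t))\le (N-1)/d(\cdot,\gamma(t))$ in the sense of distributions, and passing to the limit yields $\Delta b^{\pm}\le 0$. Since $b^{+}+b^{-}\ge 0$ is a nonnegative superharmonic function vanishing at an interior point (any point of $\gamma$), the strong maximum principle (which holds in the $\RCD$ setting via the linearity of the heat flow provided by infinitesimal Hilbertianity) forces $b^{+}+b^{-}\equiv 0$, and hence both Busemann functions are harmonic. Write $b:=b^{+}$.

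The next step is to promote harmonicity of $b$ to rigidity. For this I would apply Gigli's Bochner inequality for $\RCD(K,N)$ spaces, which in the form
\begin{equation*}
\tfrac{1}{2}\Delta|\grad b|^{2}\ \ge\ |\Hess b|^{2}+\langle \grad b,\grad\Delta b\rangle+\frac{(\Delta b)^{2}}{N},
\end{equation*}
together with $\Delta b=0$, $|\grad b|\le 1$ and $|\grad b|=1$ along $\gamma$, forces $|\grad b|\equiv 1$ and $\Hess b\equiv 0$ in the weak Sobolev sense. This is the main technical obstacle, since in the nonsmooth setting the Hessian must be constructed via the $\Gamma_{2}$ calculus and one needs self-improvement of the Bochner inequality to a $1$-Bakry--\'Emery condition to test against functions in the right class; this is precisely where Gigli's machinery of the tangent module and the heat-flow regularisation of $|\grad b|^{2}$ does the heavy lifting.

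Once the Bochner rigidity is in hand, the gradient flow $(\Phi_{t})$ of $b$ is well defined by the Regular Lagrangian Flow theory on $\RCD$ spaces and, by $\Hess b=0$, consists of isometries that commute with distance in the direction of $\grad b$. Setting $M':=b^{-1}(0)$ with its induced intrinsic distance and the disintegration measure $m'$ obtained from $m$ along the level sets, the map
\begin{equation*}
\Psi:\R\times M'\to M,\qquad (t,x')\mapsto\Phi_{t}(x'),
\end{equation*}
is the candidate isometry. Bijectivity follows since the orbits of $\Phi_{t}$ are lines (as $|\grad b|\equiv 1$ and $b$ is linear along them) and cover $M$; the metric splitting $d_{M}(\Psi(t,x'),\Psi(s,y'))^{2}=|t-s|^{2}+d'(x',y')^{2}$ follows from the Pythagorean identity obtained by applying the Bochner rigidity to pairs of trajectories. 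Finally, the measure-preserving statement $\Psi_{\#}(\mathcal{L}^{1}\otimes m')=m$ is obtained from the disintegration theorem together with the fact that the flow $(\Phi_{t})$ preserves $m$ (a consequence of $\Delta b=0$). The $\RCD(0,N-1)$ property of $(M',d',m')$ is then read off from the tensorisation and stability properties of the $\CD^{*}$ condition for non-branching spaces, which reduce the curvature-dimension bound on the factor to the bound on the product after removing one flat direction.
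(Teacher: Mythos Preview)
The paper does not prove this theorem: it is quoted as an external result due to Gigli \cite{Gigli-Splitting} and used as a black box in the proof of Theorem~\ref{THM:RICCI_NONNEG}. So there is no ``paper's own proof'' to compare against.

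That said, your outline is a faithful sketch of Gigli's actual argument in \cite{Gigli-Splitting}: Busemann functions from the line, Laplacian comparison to obtain superharmonicity, the strong maximum principle to force $b^{+}+b^{-}\equiv 0$ and hence harmonicity, and then Bochner rigidity to conclude $|\nabla b|\equiv 1$ with vanishing Hessian, followed by the gradient flow of $b$ to produce the product structure. Two caveats worth flagging if you intend this as more than a heuristic: first, the pointwise Bochner inequality you wrote with an explicit $|\Hess b|^{2}$ term is not directly available in the $\RCD$ setting; Gigli works instead with the dimensional weak Bochner inequality and a careful self-improvement argument to extract $|\nabla b|\equiv 1$, and the ``$\Hess b=0$'' statement is encoded as the assertion that the gradient flow of $b$ is a one-parameter group of measure-preserving isometries rather than as a literal vanishing of a Hessian tensor. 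Second, the final claim that the factor $(M',d',m')$ is $\RCD(0,N-1)$ is not obtained by ``removing one flat direction'' from tensorisation (which goes the wrong way), but requires a separate argument showing that the Cheeger energy on $M'$ is quadratic and that the entropy displacement convexity passes to the slice; this is a nontrivial part of Gigli's paper.
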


We are now ready to prove the theorem.


\begin{proof}[\bf Proof of Theorem \ref{THM:RICCI_NONNEG}]
Recall that the coefficients for the reduced curvature dimension condition $\CD^*(0,N)$ coincide with those of the usual $\CD(0,N)$, so we can work with any of the two conditions. We consider two possibilities, depending on whether or not $X$ is a topological manifold. \\


\noindent \textbf{Case 1:} \emph{$X$ is a topological manifold.} In this case, there are two possibilities: either the fundamental group $\pi_1(X)$ is finite or not. In the former case, $X$ is homeomorphic to a spherical space form, by Perelman's resolution of Thurston's Elliptization Conjecture, so we focus our attention on the case where $\pi_1(X)$ is infinite.  

Let $\widetilde{X}$ denote the universal cover of $X$ equipped with the lifted metric and measure. Namely, $\widetilde{X}$ is a $\CD(0,3)$-Alexandrov space and contains a line since $|\pi_1(X)| = \infty$. By Gigli's splitting theorem, $\widetilde{X}$ splits isometrically as a product $\mathbb{R} \times \widetilde{X}'$, where $\widetilde{X}'$ is also $\RCD(0, 2)$. It follows that $\widetilde{X}'$ is a simply-connected $2$-dimensional Alexandrov space. Thus, $\widetilde{X}'$ is either $\mathbb{S}^2$ or $\mathbb{R}^2$.

Suppose first that $\widetilde{X}'$ is a topological $2$-sphere. It follows from \cite{To} that $X$ will be homeomorphic to either $\sphere^2\times\sphere^1$ or $\RP^3\#\RP^3$, if $X$ is orientable, or to $\sphere^1\times\RP^2$ or $\sphere^2\tilde{\times}\sphere^1$, the nonorientable $\sphere^2$-bundle over $\sphere^1$, if $X$ is nonorientable.

Suppose now that $\widetilde{X}'$ is homeomorphic to $\RR^2$. Then, since $X$ is closed, the metric on $\widetilde{X}'$ has a compact quotient by isometries. We conclude, by applying again Gigli's Splitting Theorem, that $\tilde{X}'$ must be isometric to Euclidean two-dimensional space $\mathbb{E}^2$. It follows that  $\widetilde{X}$ is isometric to $\mathbb{E}^3$ and $X$ is isometric to a closed flat $3$-manifold.
\\


\noindent \textbf{Case 2:} \emph{$X$ is not a topological manifold.} Then there  exists a double branched cover $\iota: Y\to X$, where $Y$ is a closed orientable topological $3$-manifold with an Alexandrov metric of nonnegative Ricci curvature and with an isometric orientation reversing involution $\iota$ whose fixed points are the branching points of the double branched cover. As in \cite{GGG}, we obtain all  possible spaces $X$ by considering orientation reversing isometric involutions with isolated fixed points on closed, orientable Alexandrov $3$-manifolds $Y$ with nonnegative Ricci curvature. Therefore, $Y$ is one of the spaces appearing in Case 1. The arguments to conclude the proof are now the same as in \cite{GGG}.   
\end{proof}


\bibliography{Classification}

\def\cprime{$'$}
\providecommand{\bysame}{\leavevmode\hbox to3em{\hrulefill}\thinspace}
\providecommand{\MR}{\relax\ifhmode\unskip\space\fi MR }
\providecommand{\MRhref}[2]{%
  \href{http://www.ams.org/mathscinet-getitem?mr=#1}{#2}
}
\providecommand{\href}[2]{#2}
\begin{thebibliography}{10}

\bibitem{AGS}
Luigi Ambrosio, Nicola Gigli, and Giuseppe Savar{\'e}, \emph{Metric measure
  spaces with {R}iemannian {R}icci curvature bounded from below}, Duke Math. J.
  \textbf{163} (2014), no.~7, 1405--1490. \MR{3205729}

\bibitem{BacherSturm}
Kathrin Bacher and Karl-Theodor Sturm, \emph{Localization and tensorization
  properties of the curvature-dimension condition for metric measure spaces},
  J. Funct. Anal. \textbf{259} (2010), no.~1, 28--56.

\bibitem{BakryEmery}
D.~Bakry and Michel {\'E}mery, \emph{Diffusions hypercontractives}, S\'eminaire
  de probabilit\'es, {XIX}, 1983/84, Lecture Notes in Math., vol. 1123,
  Springer, Berlin, 1985, pp.~177--206. \MR{889476 (88j:60131)}

\bibitem{Bertrand}
J{\'e}r{\^o}me Bertrand, \emph{Existence and uniqueness of optimal maps on
  {A}lexandrov spaces}, Adv. Math. \textbf{219} (2008), no.~3, 838--851.
  \MR{2442054 (2009j:49094)}

\bibitem{BBI}
Dmitri Burago, Yuri Burago, and Sergei Ivanov, \emph{A course in metric
  geometry}, Graduate Studies in Mathematics, vol.~33, American Mathematical
  Society, Providence, RI, 2001. \MR{1835418 (2002e:53053)}

\bibitem{BGP}
Yu. Burago, M.~Gromov, and G.~Perelman, \emph{A. {D}. {A}leksandrov spaces with
  curvatures bounded below}, Uspekhi Mat. Nauk \textbf{47} (1992), no.~2(284),
  3--51, 222. \MR{1185284 (93m:53035)}

\bibitem{MondinoCavalletti}
Fabio Cavalletti and Andrea Mondino, \emph{Measure rigidity of {R}icci
  curvature lower bounds}, Adv. Math. \textbf{286} (2016), 430--480.
  \MR{3415690}

\bibitem{CheegerColding}
Jeff Cheeger and Tobias~H. Colding, \emph{On the structure of spaces with
  {R}icci curvature bounded below. {I}}, J. Differential Geom. \textbf{46}
  (1997), no.~3, 406--480. \MR{1484888 (98k:53044)}

\bibitem{ColdingNaber}
Tobias~Holck Colding and Aaron Naber, \emph{Characterization of tangent cones
  of noncollapsed limits with lower {R}icci bounds and applications}, Geom.
  Funct. Anal. \textbf{23} (2013), no.~1, 134--148. \MR{3037899}

\bibitem{DengSturm}
Qintao Deng and Karl-Theodor Sturm, \emph{Localization and tensorization
  properties of the curvature-dimension condition for metric measure spaces,
  {II}}, J. Funct. Anal. \textbf{260} (2011), no.~12, 3718--3725. \MR{2781974
  (2012i:53030)}

\bibitem{GGG}
Fernando Galaz-Garcia and Luis Guijarro, \emph{On three-dimensional
  {A}lexandrov spaces}, Int. Math. Res. Not. IMRN (2015), no.~14, 5560--5576.
  \MR{3384449}

\bibitem{Gigli-Splitting}
Nicola Gigli, \emph{An overview of the proof of the splitting theorem in spaces
  with non-negative {R}icci curvature}, Anal. Geom. Metr. Spaces \textbf{2}
  (2014), 169--213. \MR{3210895}

\bibitem{GKO}
Nicola Gigli, Kazumasa Kuwada, and Shin-Ichi Ohta, \emph{Heat flow on
  {A}lexandrov spaces}, Comm. Pure Appl. Math. \textbf{66} (2013), no.~3,
  307--331. \MR{3008226}

\bibitem{GW}
Karsten Grove and Burkhard Wilking, \emph{A knot characterization and
  1-connected nonnegatively curved 4-manifolds with circle symmetry}, Geom.
  Topol. \textbf{18} (2014), no.~5, 3091--3110. \MR{3285230}

\bibitem{Hamilton}
Richard~S. Hamilton, \emph{Three-manifolds with positive {R}icci curvature}, J.
  Differential Geom. \textbf{17} (1982), no.~2, 255--306. \MR{664497
  (84a:53050)}

\bibitem{HarveySearle}
John. Harvey and Catherine Searle, \emph{Orientation and symmetries of
  alexandrov spaces}, arXiv:1209.1366 [math.DG] (2013).

\bibitem{Hempel}
John Hempel, \emph{{$3$}-{M}anifolds}, Princeton University Press, Princeton,
  N. J.; University of Tokyo Press, Tokyo, 1976, Ann. of Math. Studies, No. 86.
  \MR{0415619 (54 \#3702)}

\bibitem{KS}
Kazuhiro Kuwae and Takashi Shioya, \emph{Infinitesimal {B}ishop-{G}romov
  condition for {A}lexandrov spaces}, Probabilistic approach to geometry, Adv.
  Stud. Pure Math., vol.~57, Math. Soc. Japan, Tokyo, 2010, pp.~293--302.
  \MR{2648266 (2011c:53083)}

\bibitem{Li}
Nan Li, \emph{Globalization with probabilistic convexity}, J. Topol. Anal.
  \textbf{7} (2015), no.~4, 719--735. \MR{3400128}

\bibitem{LV}
John Lott and C{\'e}dric Villani, \emph{Ricci curvature for metric-measure
  spaces via optimal transport}, Ann. of Math. (2) \textbf{169} (2009), no.~3,
  903--991. \MR{2480619 (2010i:53068)}

\bibitem{LottVillani}
\bysame, \emph{Ricci curvature for metric-measure spaces via optimal
  transport}, Ann. of Math. (2) \textbf{169} (2009), no.~3, 903--991.
  \MR{2480619 (2010i:53068)}

\bibitem{Massey}
William~S. Massey, \emph{A basic course in algebraic topology}, Graduate Texts
  in Mathematics, vol. 127, Springer-Verlag, New York, 1991. \MR{1095046
  (92c:55001)}

\bibitem{MondinoNaber}
Aaron Naber and Andrea Mondino, \emph{Structure theory of metric-measure spaces
  with lower ricci curvature bounds i}, arXiv:1405.2222v2[math.DG] (2014).

\bibitem{Perelman-Morse}
G.~Ya. Perel{\cprime}man, \emph{Elements of {M}orse theory on {A}leksandrov
  spaces}, Algebra i Analiz \textbf{5} (1993), no.~1, 232--241. \MR{1220498
  (94h:53054)}

\bibitem{Petrunin-GAFA}
Anton Petrunin, \emph{Parallel transportation for {A}lexandrov space with
  curvature bounded below}, Geom. Funct. Anal. \textbf{8} (1998), no.~1,
  123--148. \MR{1601854 (98j:53048)}

\bibitem{Petrunin}
\bysame, \emph{Alexandrov meets {L}ott-{V}illani-{S}turm}, M\"unster J. Math.
  \textbf{4} (2011), 53--64. \MR{2869253 (2012m:53087)}

\bibitem{RajalaSturm}
Tapio Rajala and Karl-Theodor Sturm, \emph{Non-branching geodesics and optimal
  maps in strong {$CD(K,\infty)$}-spaces}, Calc. Var. Partial Differential
  Equations \textbf{50} (2014), no.~3-4, 831--846. \MR{3216835}

\bibitem{SturmI}
Karl-Theodor Sturm, \emph{On the geometry of metric measure spaces. {I}}, Acta
  Math. \textbf{196} (2006), no.~1, 65--131. \MR{2237206 (2007k:53051a)}

\bibitem{SturmII}
\bysame, \emph{On the geometry of metric measure spaces. {II}}, Acta Math.
  \textbf{196} (2006), no.~1, 133--177. \MR{2237207 (2007k:53051b)}

\bibitem{To}
Jeffrey~L. Tollefson, \emph{The compact {$3$}-manifolds covered by
  {$S\sp{2}\times R\sp{1}$}}, Proc. Amer. Math. Soc. \textbf{45} (1974),
  461--462. \MR{0346792 (49 \#11516)}

\bibitem{Villani}
C{\'e}dric Villani, \emph{Optimal transport}, Grundlehren der Mathematischen
  Wissenschaften [Fundamental Principles of Mathematical Sciences], vol. 338,
  Springer-Verlag, Berlin, 2009, Old and new. \MR{2459454 (2010f:49001)}

\bibitem{ZhuZhang}
Huichun Zhang and Xiping Zhu, \emph{On a new definition of {R}icci curvature on
  {A}lexandrov spaces}, Acta Math. Sci. Ser. B Engl. Ed. \textbf{30} (2010),
  no.~6, 1949--1974. \MR{2778704 (2012e:53068)}

\end{thebibliography}
\bibliographystyle{amsplain}
\end{document}